\documentclass[11pt,twoside]{amsart}
\usepackage[latin1]{inputenc}
\usepackage[T1]{fontenc}
\usepackage{mathtools}
\usepackage{graphicx}
\usepackage{tikz}
\usetikzlibrary{chains}

\textwidth=450pt 
\oddsidemargin=12pt
\evensidemargin=12pt

\setlength{\footskip}{25pt}

\usepackage[latin1]{inputenc}
\usepackage{amsmath}
\usepackage{amsthm}
\usepackage{amssymb}

\usepackage[all]{xy}
\usepackage{hyperref}
\newtheorem{thm}{Theorem}[section]

\newtheorem{prop}[thm]{Proposition}

\newtheorem{lem}[thm]{Lemma}
\newtheorem{cor}[thm]{Corollary}

\numberwithin{equation}{section}

\theoremstyle{definition}

\newtheorem{remark}[thm]{Remark}

\newcommand{\Db}{{\rm D}^{\rm b}}

\newcommand{\Aut}{{\rm Aut}}

\newcommand{\NS}{{\rm NS}}
\newcommand{\Pic}{{\rm Pic}}
\newcommand{\rk}{{\rm rk}}

\newcommand{\cal}{\mathcal}
\newcommand{\ka}{{\cal A}}

\newcommand{\kc}{{\cal C}}

\newcommand{\kl}{{\cal L}}

\newcommand{\km}{{\cal M}}
\newcommand{\ko}{{\cal O}}

\newcommand{\ks}{{\cal S}}

\newcommand{\kx}{{\cal X}}

\newcommand{\ZZ}{\mathbb{Z}}
\newcommand{\QQ}{\mathbb{Q}}
\newcommand{\RR}{\mathbb{R}}
\newcommand{\CC}{\mathbb{C}}

\newcommand{\PP}{\mathbb{P}}

\newcommand{\hh}{\mathfrak{h}}

\newcommand{\OO}{{\rm O}}
\renewcommand{\to}{\xymatrix@1@=15pt{\ar[r]&}}
\renewcommand{\rightarrow}{\xymatrix@1@=15pt{\ar[r]&}}
\renewcommand{\leftarrow}{\xymatrix@1@=15pt{&\ar[l]}}
\renewcommand{\mapsto}{\xymatrix@1@=15pt{\ar@{|->}[r]&}}
\renewcommand{\twoheadrightarrow}{\xymatrix@1@=18pt{\ar@{->>}[r]&}}
\renewcommand{\hookrightarrow}{\xymatrix@1@=15pt{\ar@{^(->}[r]&}}
\newcommand{\hook}{\xymatrix@1@=15pt{\ar@{^(->}[r]&}}
\newcommand{\congpf}{\xymatrix@1@=15pt{\ar[r]^-\sim&}}
\renewcommand{\cong}{\simeq}

\usepackage{comment}
\excludecomment{comment}
\begin{document}

\title[]{Finiteness  of polarized K3 surfaces and hyperk\"ahler manifolds}

\author[D.\ Huybrechts]{D.\ Huybrechts}

\address{Mathematisches Institut,
Universit{\"a}t Bonn, Endenicher Allee 60, 53115 Bonn, Germany}
\email{huybrech@math.uni-bonn.de}

\begin{abstract} \noindent
 In the moduli space of polarized varieties $(X,L)$ the same unpolarized variety $X$ can occur
more than once. However, for K3 surfaces, compact hyperk\"ahler manifolds,
and abelian varieties the `orbit'  of $X$, i.e.\ the subset $\{(X_i,L_i)\mid X_i\cong X\}$,
is known to be finite, which may be viewed as a consequence of the Kawamata--Morrison cone conjecture.
In this note we provide a proof of this finiteness  not relying on the cone conjecture and, in fact,
not even on the global Torelli theorem. Instead, it uses the geometry of the moduli space of polarized varieties to conclude the finiteness by means of Baily--Borel type arguments.
We also address related questions concerning finiteness  in twistor families associated with polarized K3 surfaces of CM type.

 \vspace{-2mm}
\end{abstract}

\maketitle
{\let\thefootnote\relax\footnotetext{The author is supported by the SFB/TR 45 `Periods,
Moduli Spaces and Arithmetic of Algebraic Varieties' of the DFG
(German Research Foundation) and the Hausdorff Center for Mathematics.}
\marginpar{}
}

The paper studies the connection between moduli spaces of polarized varieties, on the one hand,
and the shape of the ample cone on a fixed variety, on the other hand. To 
illustrate our point of departure, let us revue a few well-known results.

\subsection{} The classical Torelli theorem shows that two complex smooth projective curves $C$ and $C'$
are isomorphic if and only if their polarized Hodge structures are isomorphic, i.e.\ there exists a Hodge isometry $H^1(C,\ZZ)\cong H^1(C,\ZZ)$,
or, equivalently, if their principally polarized Jacobians $J(C)\cong J(C')$ are isomorphic.
Dropping the compatibility with the polarizations, so only requiring isomorphisms of unpolarized
Hodge structures or unpolarized abelian varieties, the  geometric relation
between $C$ and $C'$ becomes less clear. In moduli theoretic terms, one may wonder about the
geometric nature of the quotient map  $\km_g\to\km_g/_\sim$. Here, $\km_g$ denotes the moduli space of genus $g$ curves
and $C\sim C'$ if and only if $J(C)\cong J(C')$ unpolarized.

Similarly, two polarized K3 surfaces $(S,L)$ and $(S',L')$ are isomorphic if and only if there
exists a Hodge isometry $H^2(S,\ZZ)\cong H^2(S',\ZZ)$ that maps $L$ to $L'$.
Dropping the latter condition has a clear geometric meaning and corresponds to
considering isomorphisms between unpolarized surfaces $S$ and $S'$. Therefore, dividing out by the 
resulting equivalence relation yields a map $M_d\to M_d/_\sim$ from the moduli space of polarized
K3 surfaces $(S,L)$ of degree $d$ to the space of isomorphism classes of K3 surfaces that merely admit a polarization
of this degree. Considering only isomorphisms of Hodge structures without any further compatibilities
leads to the analogue of the aforementioned question for curves. At this time, there is no clear picture of
what the existence of an unpolarized isomorphism of Hodge structures could mean
for the geometry of the two K3 surfaces, but finiteness has recently been established in \cite{Ef}.
\smallskip

Other types of varieties, like abelian varieties, Calabi--Yau or hyperk\"ahler varieties, can be discussed from the same perspective.

\subsection{} Let us move to the cone side. For a K3 surface $S$,
the ample cone ${\rm Amp}(S)\subset\NS(S)\otimes\RR$ and its closure, the nef cone ${\rm Nef}(S)$,
are complicated and usually not rationally polyhedral. The situation changes when
the natural action of $\Aut(S)$ is taken into account. More precisely, there exists a fundamental
domain $\Pi\subset{\rm Nef}^+(S)$ for the action of $\Aut(S)$ on the effective nef cone that is rational polyhedral,
see \cite{Sterk} or \cite[Ch.\ 8]{HuyK3}. Its generalization to smooth projective varieties with trivial canonical bundle is the
cone conjecture of Kawamata \cite{Kaw} and Morrison \cite{Mor}. It has been proved for abelian varieties \cite{PS}
and for hyperk\"ahler manifolds \cite{AmVerb,AmVerb2,MarYos}. For recent progress in the case of
Calabi--Yau varieties see 
 \cite{LazPet}. 

The cone conjecture has the following somewhat less technical consequence:
Up to the action of the group of automorphisms, there exist at most
finitely many polarizations of a fixed degree, see \cite{Sterk} for the argument.
For abelian varieties  the result had been observed already in  \cite{NarNor}. 

\subsection{}
These two circles of ideas are of course linked. For example, for K3 surfaces the fibre  over $S_0$
of the map $M_d\to M_d/_\sim$ is naturally identified with the set $\{L\mid \text{ample},\ (L)^2=d\}/\Aut(S_0)$.
Due to the cone conjecture for K3 surfaces, this set is finite and, therefore, all fibres of $M_d\to M_d/_\sim$ are finite. 
Similarly, for the moduli space ${\mathcal A}_{g,d}$ of polarized abelian varieties of dimension $g$ and degree $d$ and a fixed (unpolarized) abelian variety $A_0$ there exist at most finitely many polarized abelian varieties
$(A_1,L_1), \ldots,(A_n,L_n)\in {\mathcal A}_{g,d}$ with $A_i\cong A_0$, i.e.\
the fibres of ${\mathcal A}_{g,d}\to{\mathcal A}_{g,d}/_\sim$, and consequently also of $\km_g\to\km_g/_\sim$, are finite.

It is worth emphasizing that the quotients $M_d/_\sim$, $\km_g/_\sim$, and $\ka_{g,d}/_\sim$ have no reasonable
geometric structure, which is mainly due to the fact that the fibres of the quotient maps are all finite but of unbounded
cardinality. See Section \ref{sec:unbound}, where this is discussed for K3 surfaces.

\smallskip

Note that the local Torelli theorem for these types of varieties immediately implies that the fibres of $M_d\to M_d/_\sim$,
${\mathcal A}_{g,d}\to{\mathcal A}_{g,d}/_\sim$, and $\km_g\to\km_g/_\sim$,  i.e.\ the sets
$$M_d(S_0)\coloneqq\{(S,L)\mid S\cong S_0\}\subset{M}_d, \text{~~}{\mathcal A}_{g,d}(A_0)\coloneqq
\{(A,L)\mid A_i\cong A_0\}\subset{\mathcal A}_{g,d},\text{ and }$$
$${\mathcal M}_g(C_0)\coloneqq\{C\mid {\rm Jac}(C)\cong{\rm Jac}(C_0)\}\subset{\mathcal M}_g,$$
are discrete subsets of the corresponding moduli spaces. The present paper is motivated by
the question whether the geometric nature of the three moduli spaces ${M}_d$, ${\mathcal A}_{g,d}$, and
${\mathcal M}_g$ (and others), namely being quasi-projective varieties, can alternatively be used
to deduce from their discreteness the finiteness of the three sets
$M_d(S_0)$, ${\mathcal A}_{g,d}(A_0)$, and ${\mathcal M}_{g}(C_0)$. There are well-known
instances where this naive idea indeed yields finiteness of certain discrete sets in appropriate
moduli spaces by verifying their algebraicity. As an example, we recall in Section \ref{sec:explainGT}
the proof for the finiteness of $\Aut(S,L)$ along these lines.

\subsection{} Although,  finiteness of the sets $M_d(S_0)$, $\ka_{g,d}(A_0)$,
or $\km_g(C_0)$ cannot be deduced quite so easily,
we will show that the quasi-projectivity of certain related moduli spaces can indeed be exploited.
We will demonstrate this for the moduli space  $M_d$ of compact
hyperk\"ahler manifolds of fixed degree and fixed dimension by proving the following result.

\begin{thm}\label{thm:HK}
Fix a compact hyperk\"ahler (or irreducible holomorphic symplectic) manifold $X_0$.
Let $M_d(X_0)\subset {M}_{d}$ be the set of polarized compact hyperk\"ahler manifolds $(X,L)$ of
degree $(L)^{2n}=d$ with $X\cong X_0$. Then $M_d(X_0)$ is finite.
\end{thm}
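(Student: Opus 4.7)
My strategy exploits the algebraic structure of the period map together with local Torelli and a finiteness property of lattice embeddings. By Viehweg's theorem, $M_d$ is a quasi-projective variety. Let $\Lambda$ denote the Beauville--Bogomolov--Fujiki lattice attached to the deformation type of $X_0$, and fix a reference class $\ell\in\Lambda$ of square $d$. The polarized period domain $\kd_d\subset\PP(\ell^\perp\otimes\CC)$ is a Hermitian symmetric domain of type IV, and its arithmetic quotient $\Gamma_d\backslash\kd_d$ is a quasi-projective variety by Baily--Borel. By Borel's extension theorem, the period map $\kp\colon M_d\to\Gamma_d\backslash\kd_d$ is a morphism of quasi-projective varieties. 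Local Torelli for hyperk\"ahler manifolds asserts that $d\kp$ is injective at every point of $M_d$, so $\kp$ is unramified, and hence quasi-finite; in particular, every fiber of $\kp$ is a finite set.

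The main step is to show that the image $\kp(M_d(X_0))\subset\Gamma_d\backslash\kd_d$ is finite. Let $T=T(X_0)\subset H^2(X_0,\ZZ)$ be the transcendental lattice of $X_0$ and $\sigma_0\in T\otimes\CC$ its period. Given $(X,L)\in M_d(X_0)$, pick an isomorphism $\phi\colon X_0\to X$ and a marking $\eta\colon H^2(X,\ZZ)\to\Lambda$ sending $L$ to $\ell$. The composite lattice isometry $\mu:=\eta\circ(\phi^{-1})^*\colon H^2(X_0,\ZZ)\to\Lambda$ carries $\phi^*L$ to $\ell$, and therefore restricts to a primitive embedding $j:=\mu|_T\colon T\hookrightarrow\Lambda_d:=\ell^\perp$. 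A direct calculation gives
\[
\kp(X,L)=[\mu(\sigma_0)]=[j_{\CC}(\sigma_0)]\in\Gamma_d\backslash\kd_d.
\]
The choice of $\phi$ is ambiguous only up to the $\Aut(X_0)$-action on $T$ (by Hodge isometries preserving $\sigma_0$ up to scaling), and the choice of $\eta$ is ambiguous only up to $\Gamma_d$; consequently the period point depends on these choices only through the $\Gamma_d$-orbit of $j$. By the classical finiteness theorem for primitive embeddings of integral lattices (Nikulin), the set of $\Gamma_d$-orbits of primitive embeddings $T\hookrightarrow\Lambda_d$ is finite, and hence so is $\kp(M_d(X_0))$.

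Combining both steps, $M_d(X_0)=\kp^{-1}(\kp(M_d(X_0)))$ is the preimage of a finite set under a quasi-finite morphism, and therefore finite. The main obstacle I anticipate is the bookkeeping in the middle paragraph: realizing the period point explicitly as a function of a lattice embedding $T\hookrightarrow\Lambda_d$, independent of the auxiliary isomorphism and marking. The arithmetic input that makes this work---finiteness of primitive lattice embeddings modulo $\Gamma_d$---is what replaces a direct appeal to the Kawamata--Morrison cone conjecture or to the global Torelli theorem.
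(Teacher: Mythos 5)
Your argument is correct, and it is a genuine reorganization of the paper's proof rather than a reproduction of it. The paper routes everything through the auxiliary moduli space of lattice-polarized manifolds: with $N=\NS(X_0)$ it builds $M_{N,d}=\bigsqcup M_{(N,\ell_j)}$, decomposes it further into pieces $M^i_{N,d}$ indexed by the finitely many $\OO(\Lambda)$-orbits of primitive embeddings $N\hookrightarrow\Lambda$, and maps each piece by a period map $\pi_i$ to the \emph{small} arithmetic quotient $\OO(T_i)\setminus D_i$ with $T_i=\eta_i(N)^\perp$; the point is then that all of $M_d(X_0)$ lands in the image of a \emph{single} fibre of one $\pi_i$, and fibres are finite by local Torelli plus Borel algebraicity. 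You instead stay on $M_d$ itself and use only the standard polarized period map $\kp\colon M_d\to\Gamma_d\setminus\kd_d$ to the \emph{big} quotient: quasi-finiteness of $\kp$ comes from the same local Torelli/Borel combination, and the new ingredient is that $\kp(M_d(X_0))$ is finite because each period point equals $[j_\CC(\sigma_0)]$ for some primitive embedding $j\colon T(X_0)\hookrightarrow\ell^\perp$, and there are finitely many such $j$ modulo the arithmetic group. So the two proofs use the dual lattices ($\NS(X_0)$ versus $T(X_0)$) and dual period domains, with the same three inputs (quasi-projectivity of moduli, Baily--Borel/Borel, Kneser finiteness of lattice embeddings). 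Your version is leaner --- it avoids constructing the lattice-polarized moduli stacks of Proposition \ref{prop:DMcoarse} altogether --- while the paper's version buys the finer structural picture (special cycles, the Fourier--Mukai factorization of $\pi_i$ in Section \ref{sec:paraFM}, and the corollary on embeddings $N\hookrightarrow\NS(X_0)$ up to $\Aut(X_0)$). Two small points you should make explicit: you need $\Gamma_d$ to be of finite index in $\OO(\Lambda_d)$ so that finiteness of embeddings modulo $\OO(\Lambda_d)$ (Kneser) descends to finiteness modulo $\Gamma_d$; and Borel's algebraicity theorem requires a torsion-free group, so one passes to a finite level structure exactly as the paper does. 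Neither affects the validity of the argument.
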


In other words, the set of ample line bundles on $X_0$ of fixed degree is finite up to the action
of the group $\Aut(X_0)$ of automorphisms of $X_0$. As the cone conjecture for hyperk\"ahler
manifolds has  recently been established in great generality in
\cite{AmVerb,AmVerb2}, see also \cite{MarYos} for a proof for the two standard series, the theorem can
also be  seen as a consequence of the cone conjecture. In fact, the full conjecture is not needed to
conclude the above result from the global Torelli theorem, a shortcut is outlined in  Section \ref{sec:proof}.
However, our approach shows that finiteness results of this type can be deduced more directly
and without using any version of the global Torelli theorem from moduli space considerations and
Baily--Borel type arguments. Alternatively, Griffiths' extension theorem can be used.  To complete the picture, we shall outline
in Section \ref{sec:KS} a proof of Theorem \ref{thm:HK} that reduces the assertion to the case of abelian
varieties via the Kuga--Satake construction.

\subsection{}
Finiteness results of this type are clearly fundamental. However, as they fail in the non-algebraic
setting, they are also quite remarkable. Recall that there indeed exist non-isotrivial families
of K3 surfaces or hyperk\"ahler manifolds with a dense subset of fibres all isomorphic to one of the
fibres. More precisely, the set of period points corresponding
to K3 surfaces or hyperk\"ahler manifolds isomorphic to a fixed one is dense except
when the complex plane $(H^{2,0}\oplus H^{0,2})$ contains a non-trivial integral class,
see \cite{Verb2,Verb3} and \cite[Rem.\ 5.7]{BL}. Theorem \ref{thm:HK} now says that this cannot
happen for polarized families. The second goal of this paper is
to prove a similar result for certain families `orthogonal' to the polarized case. 
More precisely, we study families provided by the twistor space construction. Let us restrict to K3 surfaces for simplicity and
recall that associated with any K3 surface $S_0$ endowed with a
K\"ahler class $\omega_0$, e.g.\ the one given by a polarization $L_0$,
one associates a twistor family $\ks\to\PP^1$ of K3 surfaces $\ks_t$, $t\in\PP^1$, with a natural 
K\"ahler class $\omega_t$. Only countably many of the fibres $\ks_t$ are projective and the complex
manifold $\ks$ is not even K\"ahler. We then prove the following non-algebraic
analogue of Theorem \ref{thm:HK},  see Proposition \ref{prop:finTwistor}.


\begin{thm}\label{thm:twistor} Let  $\ks\to\PP^1$ be the twistor space associated with
a polarized K3 surface $(S_0,L_0)$. Assume that $S_0$ has CM.
Then at most finitely many twistor fibres $\ks_t$ are  isomorphic to $S_0$.
\end{thm}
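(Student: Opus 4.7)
The plan is to use the global Torelli theorem for K3 surfaces to reduce the assertion to a period-theoretic intersection problem, and then to exploit the CM hypothesis to bound that intersection. Set $\Lambda := H^2(S_0, \ZZ)$, identified with $H^2(\ks_t, \ZZ)$ for every $t \in \PP^1$ by topological triviality of the twistor family, so that only the Hodge decomposition varies with $t$. Writing $W := \RR L_0 \oplus \RR\,{\rm Re}(\sigma_0) \oplus \RR\,{\rm Im}(\sigma_0) \subset \Lambda_\RR$ for the twistor three-plane, the periods $[\sigma_t]$ trace out the smooth conic
$$
C := \{[\sigma] \in \PP(W_\CC)\mid \sigma^2 = 0,\ \sigma\cdot\bar\sigma > 0\}\subset \Omega,
$$
and $W$ contains the rational class $L_0 \in \NS(S_0)_\QQ$. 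By the global Torelli theorem, $\ks_t \cong S_0$ as complex manifolds iff there is a Hodge isometry $\phi \in \OO(\Lambda)$ (preserving the orientation of the positive cone) with $\phi([\sigma_t]) = [\sigma_0]$; hence the theorem reduces to the finiteness of the intersection $C \cap (\OO(\Lambda)\cdot [\sigma_0])$.

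The CM hypothesis is what forces this intersection to be finite. Since $S_0$ has CM, the Mumford--Tate group of $H^2(S_0, \QQ)$ is an algebraic torus over $\QQ$ and $E := \End_{\rm Hdg}(T(S_0))_\QQ$ is a CM field acting on the transcendental lattice with $\dim_E T(S_0)_\QQ = 1$. Every period in the orbit $\OO(\Lambda)\cdot [\sigma_0]$ carries a CM Hodge structure isomorphic to that of $S_0$, and the associated transcendental lattice is an $E$-stable primitive sublattice of $\Lambda$. There are only finitely many such sublattices of each isomorphism type, so the orbit is pinned down by the finite datum of an $E$-stable transcendental sublattice together with an $E$-linear isometry to $T(S_0)$. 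The intersection with $C$ should then be controlled by the quadratic equation cutting out $C$ inside $W_\CC$ combined with the $E$-linear constraints on any period equivalent to $[\sigma_0]$, leaving only finitely many candidates.

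The main obstacle is handling the stabilizer of $[\sigma_0]$ inside $\OO(\Lambda)$, which for a CM K3 surface can be an infinite arithmetic group (essentially the norm-one units of the maximal order of $E$, of Dirichlet rank $|\Phi| - 1$). Care is needed to ensure that, modulo this stabilizer, only finitely many $\phi$ contribute to the intersection with $C$. An alternative route, indicated in Section \ref{sec:KS}, is to go through the Kuga--Satake construction and reduce the statement to an analogous finiteness for twistor-type families of CM abelian varieties, where the classical theory of complex multiplication provides the finiteness of isomorphism classes within a fixed isogeny class; this transfer is itself the main technical step in that alternative approach.
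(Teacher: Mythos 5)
Your reduction is sound as far as it goes: identifying all fibres with a fixed lattice via the twistor trivialization and observing that $\ks_t\cong S_0$ forces a Hodge isometry whose image period lies on the twistor conic is exactly the starting point of the paper's argument (in fact only the easy direction is needed there -- an isomorphism, or even a derived equivalence, induces a Hodge isometry $T(S_0)\cong T(\ks_t)$, so the full global Torelli theorem is not required). But the proposal stops precisely where the real work begins. The sentence ``the intersection with $C$ should then be controlled by \dots leaving only finitely many candidates'' is the entire content of the theorem, and the one concrete finiteness claim you do make -- that there are only finitely many $E$-stable primitive sublattices of $\Lambda$ of a given isomorphism type -- is false as stated: an indefinite lattice admits infinitely many primitive sublattices abstractly isomorphic to $T(S_0)$ (finitely many $\OO(\Lambda)$-orbits, each infinite), and even after restricting to sublattices of the saturation of $T(S_0)\oplus\ZZ\cdot c_1(L_0)$, which has indefinite signature $(3,n-2)$, there may a priori be infinitely many isometric embeddings of $T(S_0)$. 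You also correctly flag the infinite stabilizer of $[\sigma_0]$ as an obstacle, but do not resolve it.

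The paper's resolution is the arithmetic content of Lemma \ref{lem:CMlambda}, which is absent from your sketch. One writes the image period as $\varphi_\CC(\sigma)=\lambda\sigma+\lambda'\bar\sigma+\nu e$ and shows $\lambda,\lambda',\nu$ lie in the CM field $K=\End_{\rm Hdg}(T(S_0)\otimes\QQ)$ with a denominator $N$ bounded independently of $\varphi$. The isometry condition gives $|\lambda|^2+|\lambda'|^2+|\nu|^2\,d/(\sigma.\bar\sigma)=1$, and -- this is where CM is genuinely used -- because complex conjugation commutes with every embedding $g\colon K\hookrightarrow\CC$, the same identity holds with nonnegative terms after applying any $g$, so \emph{all} archimedean absolute values of $N\lambda,N\lambda'$ are bounded by $N$. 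Minkowski theory then leaves finitely many possibilities, and the generality of the transcendental Hodge structure shows $(\lambda,\lambda')$ determines $\varphi$, hence $t$, essentially uniquely; this last step is also what disposes of the stabilizer issue, since one counts period points rather than isometries. Without an argument of this kind (or a genuine execution of the Kuga--Satake transfer, which you mention only as an alternative route and likewise do not carry out), the proposal does not prove the theorem.
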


Despite the similarities between the two finiteness results,
they are rather different from another perspective. Namely,
in Theorem \ref{thm:HK} the number $|M_d(S_0)|$ is finite
but unbounded for varying $S_0$, whereas in Theorem \ref{thm:twistor}
the number of fibres isomorphic to $S_0$  only depends on the CM field
and can be universally bounded by $132$, cf.\ Remark \ref{rem:132}.
At this point, it is not clear whether the assumption on $S_0$ to have CM is really necessary, but the proof suggests that
it might. 

\medskip

%
%
%
%


\subsection{} We are also interested in the metric aspect. Recall that to any polarization
$L$ on a compact hyperk\"ahler manifold $X$ there is naturally associated a hyperk\"ahler
metric $g_L$ on the underlying manifold. The resulting Riemannian manifold shall be denoted
$(X,g_L)$. From the Riemannian perspective it is then natural to wonder how often the same
Riemannian manifold occurs for $(X,L)\in M_d$. Again restricting to the case of K3 surfaces
for simplicity, we prove the following result, see Corollary \ref{cor:Omet}.

\begin{thm} Let $M_d$ be the moduli space of polarized K3 surfaces $(S,L)$ of degree $(L)^2=d$.
Then the set $M_d(S_0,g_{L_0})\subset M_d$ of polarized K3 surfaces $(S,L)\in M_d$ for which
there exists an isometry $(S_0,g_{L_0})\cong (S,g_L)$ of the underlying Riemannian manifolds
is finite.
\end{thm}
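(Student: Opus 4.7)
The plan is to exploit the fact that the Ricci-flat K\"ahler metric $g_L$ carries a canonical hyperk\"ahler structure, via Yau's theorem, and to use this to translate the Riemannian condition into a condition on integral cohomology classes lying in a fixed positive-definite real 3-plane.

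By Yau's theorem, $g_L$ is the unique Ricci-flat K\"ahler metric in the class $c_1(L)$, and on a K3 surface it underlies a hyperk\"ahler structure with an $S^2$ of compatible complex structures. The three K\"ahler forms $\omega_I,\omega_J,\omega_K$ of any orthogonal triple in this 2-sphere are parallel with respect to the Levi-Civita connection, and together span the space of parallel real 2-forms on the underlying hyperk\"ahler 4-manifold. Given an isometry $\phi\colon(S_0,g_{L_0})\to(S,g_L)$, the pullback $\phi^*(\omega_L)$ is again parallel (since $\phi$ intertwines the Levi-Civita connections), and hence lies in the real 3-plane $V\subset H^2(S_0,\RR)$ spanned by the three parallel K\"ahler forms associated to $(S_0,g_{L_0})$.

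The class $\phi^*(c_1(L))\in V$ is integral (as $\phi^*$ preserves integral cohomology) and of intersection square $d$ (as $\phi^*$ preserves the cup product). The intersection form is positive definite on $V$, so the set
\[
V\cap H^2(S_0,\ZZ)\cap\bigl\{v\mid (v)^2=d\bigr\}
\]
is finite: only finitely many vectors of fixed positive norm in a positive-definite integral lattice of rank at most three.

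To conclude, I would verify that $\phi^*(c_1(L))$ determines $(S,L)$ up to polarized isomorphism. The direction of this vector in $V$ picks out a specific point $t$ of the twistor sphere and hence a specific complex structure $J_t$ on $S_0$, with respect to which $\phi$ becomes a biholomorphism $(S_0,J_t)\cong(S,I_S)$ carrying $\phi^*(c_1(L))$ to $c_1(L)$. Two elements of $M_d(S_0,g_{L_0})$ giving rise to the same vector (for suitable choices of isometry) are therefore polarized isomorphic, and $M_d(S_0,g_{L_0})$ injects into the finite set above. The main delicate point, as I see it, is keeping track of how the twistor parameter $t$ is recovered from the direction of the K\"ahler class in $V$, so that the Riemannian isometry $\phi$ can genuinely be reinterpreted as a biholomorphism with respect to the complex structures $J_t$ and $I_S$; once this bookkeeping is carried out, the argument is self-contained and makes no appeal to global Torelli or the cone conjecture.
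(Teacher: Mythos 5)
Your argument is correct and is essentially the paper's own proof: the paper likewise observes (Lemma \ref{lem:key}) that the K\"ahler classes $\omega_t$ of the twistor family of $(S_0,g_{L_0})$ sweep out the positive definite $3$-plane $(H^{2,0}\oplus H^{0,2})(S_0,\RR)\oplus\RR\cdot\omega_0$, which meets $H^2(S_0,\ZZ)$ in a positive definite lattice of rank at most $3$ containing only finitely many classes of square $d$, and that each such class determines the twistor fibre (up to conjugation) and hence $(S,L)$ up to polarized isomorphism. Your reformulation via parallel $2$-forms and the pulled-back class $\phi^*(c_1(L))$ is just a slightly more hands-on packaging of the same twistor-line argument.
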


\noindent
{\bf Acknowledgement:} 
Thanks to Benjamin Bakker for a discussion related to Remark \ref{rem:Lagr}, to
Bal\'azs Szendr\"oi for an email exchange concerning \cite{Szendroi}, to Klaus Hulek
for drawing my attention to \cite{La}, to Ariyan Javanpeykar for insightful comments
on Proposition \ref{prop:DMcoarse} and Section \ref{sec:proof}, and to Lisa Li for comments on
the first version.  Discussions with Fran\c{c}ois Charles and
Andrey Soldatenkov related to Section \ref{sec:twistorfini} have been very useful,
their help is most gratefully acknowledged. 
\section{Finiteness of polarizations on hyperk\"ahler manifolds}\label{sec:FinHK}
Without using the global Torellli theorem, finiteness of polarizations of fixed degree on a compact hyperk\"ahler manifold
can be  proved by applying Baily--Borel type arguments that ensure that certain arithmetic quotients and maps
between them are algebraic.

\subsection{} Consider $H^2(X_0,\ZZ)$ of a compact hyperk\"ahler (or irreducible holomorphic
symplectic) mani\-fold $X_0$ with its Beauville--Bogomolov form $q_{X_0}$ as an abstract lattice
$\Lambda$. For example, if $X_0$ is a K3 surface, then $\Lambda\cong E_8(-1)^{\oplus 2}\oplus U^{\oplus 3}$.
In general, nothing is known about  $\Lambda$ beyond the fact that it is non-degenerate of signature $(3,b_2(X_0)-3)$
and a few restrictions on $b_2(X_0)$ in low dimensions.
For $X_0$ projective, the Hodge index theorem implies that the N\'eron--Severi lattice $H^{1,1}(X_0,\ZZ)\cong{\rm NS}(X_0)$ is a
non-degenerate primitive sublattice of signature $(1,\rho(X_0)-1)$. 

\begin{remark}\label{rem:Kneser}
We shall use the following elementary facts from lattice theory, cf.\ \cite[Satz 30.2]{Kneser}, the second being a special case of the first.
Let $N$ and $\Lambda$ be arbitrary lattices.
\begin{itemize}
\item[(i)]
Up to the action of the orthogonal group $\OO(\Lambda)$, there exist at most finitely many (primitive) embeddings
$\eta_i\colon N\,\hookrightarrow \Lambda$, $i=1,\ldots,k$. In the following, we often denote
the lattices given as the  orthogonal complements by $T_i\coloneqq \eta_i(N)^\perp\subset \Lambda$.
\item[(ii)]
Up to the action of $\OO(N)$, there exist only finitely many (primitive)  classes $\ell\in N$
with fixed square $(\ell)^2=d$.
\item[(iii)]
If $\Lambda$ is definite, then there exist at most finitely many embeddings $N\,\hookrightarrow\Lambda$.
\end{itemize}
\end{remark}

\subsection{} Assume now that $N$ has signature $(1,\rho(X)-1)$ and fix an element $\ell\in N$ with $(\ell)^2=d$. Then
consider the moduli stack $\km_{(N,\ell)}$ of $(N,\ell)$-polarized hyperk\"ahler manifolds of deformation type 
(or just diffeomorphic or homeormorphic to) $X_0$. So,
$\km_{(N,\ell)}(T)$ consists of families
$\pi\colon\kx\to T$ of compact hyperk\"ahler manifolds deformation equivalent to $X_0$
 together with an embedding $\iota\colon\underline N\,\hookrightarrow R^2\pi_*\ZZ$ of locally constant systems on $T$
 which fibrewise induces
a primitive embedding of lattices $N\,\hookrightarrow \Pic(\kx_t)\cong{\rm NS}(\kx_t)\cong H^{1,1}(\kx_t,\ZZ)\subset H^2(\kx_t,\ZZ)$ mapping $\ell$ to an ample line bundle $L_t$ on $\kx_t$.

Standard moduli space constructions yield the following result.

\begin{prop}\label{prop:DMcoarse}
The stack $\km_{(N,\ell)}$ of $(N,\ell)$-polarized compact hyperk\"ahler manifolds deformation equivalent to $X_0$
 is a Deligne--Mumford stack with a quasi-polarized coarse moduli space $M_{(N,\ell)}$.
\end{prop}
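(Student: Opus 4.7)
The plan is a standard two-step construction: first establish the result for the coarser moduli stack of merely polarized hyperk\"ahler manifolds of degree $d$, then add the $(N,\ell)$-marking as a finite refinement. The main obstacle lies in the first step, where the standard moduli machinery (boundedness, Matsusaka, Viehweg) has to be invoked to produce the coarse quasi-projective space $M_d$; the second step is then a formal consequence of lattice theory (Remark \ref{rem:Kneser}) and the rigidity of integral local systems.

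\emph{Step 1.} I would first consider the moduli stack $\km_d$ of polarized compact hyperk\"ahler manifolds $(X,L)$ with $(L)^{2n}=d$ and $X$ deformation equivalent to $X_0$, and show it is a separated Deligne--Mumford stack of finite type with a quasi-projective coarse moduli space $M_d$. The standard ingredients suffice: a fixed Hilbert polynomial (Fujiki relation plus Hirzebruch--Riemann--Roch in the deformation class); Matsusaka's big theorem, giving a uniform $k_0$ such that $L^{\otimes k_0}$ is very ample with controlled embedding dimension, so that $\km_d$ is realised as a quotient stack of a locally closed subscheme of a Hilbert scheme by a $\mathrm{PGL}_m$-action; finiteness of $\Aut(X,L)$ (a general fact for varieties with trivial canonical class), yielding the DM property; and Viehweg's theorem on moduli of polarized manifolds with semi-ample canonical bundle, which provides the quasi-projectivity of $M_d$ together with an ample line bundle via a determinant of some Hodge bundle $\det\pi_\ast L^{\otimes k}$, $k\gg 0$.

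\emph{Step 2.} I would then analyse the forgetful morphism $\km_{(N,\ell)}\to\km_d$, $(X,L,\iota)\mapsto(X,\iota(\ell))$. Over a geometric point $(X,L)$ the fibre is the set of primitive embeddings $\iota\colon N\hookrightarrow\NS(X)$ with $\iota(\ell)=c_1(L)$; any such $\iota$ is determined by its restriction to $\ell^\perp\subset N$, which in turn is an isometric embedding of a negative definite lattice into the negative definite lattice $c_1(L)^\perp\subset\NS(X)$. By Remark \ref{rem:Kneser}(iii) there are only finitely many such, so the fibre is finite. Over a general base $T$, the extension of a fibrewise $\iota$ to an embedding $\underline{N}\hookrightarrow R^2\pi_\ast\ZZ$ is unique up to monodromy, and the requirement that its image lands fibrewise in $\NS$ is a closed condition on $T$ by the Lefschetz $(1,1)$-theorem. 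These observations together make the forgetful morphism representable, quasi-finite, unramified and proper -- hence finite.

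\emph{Step 3.} Finite representable morphisms preserve the DM property and finite type, so $\km_{(N,\ell)}$ is itself a separated DM stack of finite type. Its coarse moduli space $M_{(N,\ell)}$ exists by Keel--Mori, and the induced morphism $M_{(N,\ell)}\to M_d$ is finite; pullback of the polarization on $M_d$ then equips $M_{(N,\ell)}$ with a quasi-polarization and, in particular, makes it quasi-projective.
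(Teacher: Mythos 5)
Your proposal is correct and follows essentially the same route as the paper: reduce to the moduli stack $\km_d$ of polarized hyperk\"ahler manifolds (Viehweg, Matsusaka--Mumford), show the forgetful morphism $\km_{(N,\ell)}\to\km_d$ is finite using Remark \ref{rem:Kneser}(iii), and conclude via Keel--Mori and the quasi-projectivity of $M_d$. The only cosmetic difference is that the paper obtains the Deligne--Mumford property by realizing $\km_{(N,\ell)}$ as an open and closed substack of $\underline\Pic^{\rho(X)}_{\kx/\km_d}$ following Beauville, whereas you deduce it by descent along the finite representable forgetful morphism; both are standard and equivalent in substance.
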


\begin{proof}
We sketch the main steps in the construction. Variants of this can be found in the literature, see \cite{Beauv,Dolg}
and Section \ref{sec:paraFM} for further comments. 

Denote by $\km_d$ the moduli stack of polarized hyperk\"ahler manifolds $(X,L)$ of degree $d$. This is a
Deligne--Mumford stack
with a quasi-projective coarse moduli space, see  \cite{Viehweg} and  
\cite[Ch.\ 5]{HuyK3} for further references in the case of K3
surfaces.\footnote{The degree of a polarization $L$ would usually be given as the top intersection form
$(L)^{2m}$ or, equivalently, as $c_{\text{F}}\cdot q(L)^m$ with the Fujiki constant $c_{\text{F}}$.
As we are only looking at varieties deformation equivalent to $X_0$, the Fujiki constant
is fixed and so prescribing the Beauville--Bogomolov square $q(L)$ or the classical degree $(L)^{2m}$ amounts to the same. In particular,
it would be enough to fix the topological type of $X_0$ in our discussion.}
The map $(\kx\to T,\iota)\mapsto (\kx\to T,\iota(\ell))$ defines a morphism $f\colon\km_{(N,\ell)}\to\km_d$. As there exist
at most finitely many isometric embeddings $\ell^\perp\,\hookrightarrow \iota(\ell)^\perp\subset \Pic(X)$ (use Remark \ref{rem:Kneser} (iii)), the
morphism is quasi-finite.
In \cite{Beauv} $\km_{(N,\ell)}$ is realized as an open and closed substack of $\underline\Pic_{\kx/\km_d}^{\rho(X)}$,
where $\kx\to\km_d$ is the universal family. This is enough to conclude that $\km_{(N,\ell)}$ is a Deligne--Mumford stack
\cite[Prop.\ 4.5]{LM}.

It is not difficult to show that $f\colon\km_{(N,\ell)}\to\km_d$ is actually proper and hence finite. Therefore,
also the induced morphism between their coarse moduli spaces $M_{(N,\ell)}\to M_d$ is finite. For the existence
of the coarse moduli spaces (as algebraic spaces) one needs to use the finiteness of the stabilizers (Matsusaka--Mumford, as for K3 surfaces), see \cite{KM}. Using that $M_d$ is quasi-projective \cite{Viehweg}, yields the  quasi-projectivity of $M_{(N,\ell)}$.
\end{proof}

According to  Remark \ref{rem:Kneser},  there exist, up to the action of $\OO(N)$, at most  finitely many $\ell_1,\ldots,\ell_m\in N$ 
with $(\ell_j)^2=d$. In this sense, the quasi-projective variety
$$M_{N,d}\coloneqq M_{(N,\ell_1)}\sqcup\ldots\sqcup M_{(N,\ell_m)}$$
can be understood as the moduli space of $N$-polarized hyperk\"ahler manifolds of deformation type $X_0$ and
degree $d$.\footnote{$\ldots$ with the slight ambiguity that a primitive embedding of $N$ may send more than one
$\ell_j$ to an ample class.}

As  explained in the above proof, mapping $(\kx\to T,\iota\colon \underline N\,\hookrightarrow R^2\pi_*\ZZ)$ to $(\kx\to T,\iota(\ell))$ defines a morphism
from $\km_{(N,\ell)}$ to the moduli stack $\km_d$ of polarized compact hyperk\"ahler manifolds of deformation type $X_0$ and  degree
$d$.
This yields a morphism between their quasi-projective coarse moduli spaces
\begin{equation}\label{eqn:morph}
M_{N,d}=M_{(N,\ell_1)}\sqcup\ldots\sqcup M_{(N,\ell_m)}\to M_d.
\end{equation}
Its image consists of all the points  that correspond to polarized hyperk\"ahler manifolds
$(X,L)$ of deformation type $X_0$ for which the polarization $L$ is contained in a primitive sublattice
of the N\'eron--Severi lattice  abstractly isomorphic to $N$. In particular, 
for $N= \NS(X_0)$ the set $M_d(X_0)\coloneqq \{(X,L)\in M_d\mid X\cong X_0\}$
is contained in the image of (\ref{eqn:morph}). Clearly, if $N=\ZZ(d)$, then
(\ref{eqn:morph}) is an isomorphism $M_{N,d}\congpf M_d$.

According to Remark \ref{rem:Kneser}, the moduli space  $M_{N,d}$ also decomposes into a finite disjoint 
union $$M_{N,d}=M^1_{N,d}\sqcup \ldots \sqcup M_{N,d}^k.$$
Here, $M^i_{N,d}$ parametrizes $N$-polarized hyperk\"ahler manifolds $(X,\iota)$
for which the composition $\iota\colon
N\,\hookrightarrow H^{1,1}(X,\ZZ)\subset H^2(X,\ZZ)\cong H^2(X_0,\ZZ)\cong \Lambda$ is equivalent to
the embedding $\eta_i$. A similar decomposition exists for each fixed $\ell_j$, so
$M_{(N,\ell_j)}=\bigsqcup M^i_{(N,\ell_j)}$.\footnote{As a side remark, but very much in the spirit of our discussion,
the fact that $M_{N,d}$ is quasi-projective allows one to circumvent Remark \ref{rem:Kneser} (i), 
as it implies that there can be at most finitely many equivalence classes of embeddings $N\,\hookrightarrow \Lambda$ obtained as composition $N\,\hookrightarrow H^{1,1}(X,\ZZ)\,\hookrightarrow H^2(X,\ZZ)\cong H^2(X_0,\ZZ)\cong\Lambda$.}

\begin{remark}
The image of $M^i_{N,\ell_j}\to M_d$ defines a special cycle in the sense
of \cite{Kudla}. In particular, each $M_d(X_0)$ is contained in a finite union of
special cycles of codimension $\rho(X_0)-1$.
\end{remark} 

\subsection{} For each of the lattices $T_i=\eta_i(N)^\perp\subset\Lambda$ we consider the
usual period domain $$D_i\subset \PP(T_i\otimes\CC)$$ defined by the
closed condition $(x)^2=0$ and the open condition $(x.\bar x)>0$.
Recall that $D_i$ can be identified with the Grassmannian of positive oriented planes in $T_i\otimes \RR$ and
that it consists of two connected components, cf.\ \cite[Ch.\ 6]{HuyK3}.

The orthogonal group $\OO(T_i)$ acts naturally on $D_i$
and due to Baily--Borel \cite{BB} the quotient $\OO(T_i)\setminus D_i$ is a quasi-projective variety with finite
quotient singularities.

\begin{prop}
Associating with $(X,\iota\colon N\,\hookrightarrow \NS(X))$ the period $\varphi(H^{2,0}(X))$ yields
a well defined and algebraic map $$\pi_i\colon M^i_{N,d}\to \OO(T_i)\setminus D_i.$$
Here, $\varphi\colon H^2(X,\ZZ)\congpf \Lambda$ is any marking
for which $\varphi\circ\iota=\eta_i$.
\end{prop}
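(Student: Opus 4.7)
The plan is to check well-definedness, then holomorphy, then deduce algebraicity from Borel's extension theorem.

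\medskip

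\emph{Well-definedness.} Since $\iota(N)\subset\NS(X)\subset H^{1,1}(X,\ZZ)$ and $H^{2,0}(X)$ is orthogonal to $H^{1,1}(X)$, the line $H^{2,0}(X)$ lies in $\iota(N)^\perp\otimes\CC$. For any marking $\varphi$ with $\varphi\circ\iota=\eta_i$, the image $\varphi(H^{2,0}(X))$ then lies in $\eta_i(N)^\perp\otimes\CC=T_i\otimes\CC$, and the K3-type conditions $(x)^2=0$, $(x.\bar x)>0$ are automatic, so $\varphi(H^{2,0}(X))\in D_i$. Two markings $\varphi,\varphi'$ with $\varphi\circ\iota=\varphi'\circ\iota=\eta_i$ differ by $g\coloneqq\varphi'\circ\varphi^{-1}\in\OO(\Lambda)$ that fixes $\eta_i(N)$ elementwise; such $g$ restricts to an element of $\OO(T_i)$, and since $\varphi'(H^{2,0}(X))=g(\varphi(H^{2,0}(X)))$, the two period points agree in $\OO(T_i)\setminus D_i$.

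\medskip

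\emph{Holomorphy.} I would introduce the marked moduli stack $\widetilde{\km}^i_{(N,\ell_j)}$ parametrizing triples $(X,\iota,\varphi)$ with $\varphi\colon H^2(X,\ZZ)\cong\Lambda$ a marking such that $\varphi\circ\iota=\eta_i$. Forgetting $\varphi$ defines an \'etale covering $\widetilde{\km}^i_{(N,\ell_j)}\to\km^i_{(N,\ell_j)}$ whose deck transformations are given by the pointwise stabilizer of $\eta_i(N)$ in $\OO(\Lambda)$, which restricts to $\OO(T_i)$ on the orthogonal complement. On the marked cover, the tautological period map to $D_i$ is holomorphic by Griffiths' general theorem on variations of Hodge structure. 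Composing with the projection $D_i\to\OO(T_i)\setminus D_i$ and descending along the \'etale cover (possible by the well-definedness step) yields a holomorphic map $\pi_i\colon M^i_{N,d}\to\OO(T_i)\setminus D_i$ on coarse moduli spaces.

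\medskip

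\emph{Algebraicity.} By Proposition~\ref{prop:DMcoarse}, $M^i_{N,d}$ is quasi-projective, and by Baily--Borel \cite{BB} so is $\OO(T_i)\setminus D_i$. Borel's extension theorem (applied, if necessary, to a resolution of singularities of $M^i_{N,d}$ and then descended) guarantees that any holomorphic map from a quasi-projective variety to an arithmetic quotient of a bounded symmetric domain is algebraic. Hence $\pi_i$ is algebraic.

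\medskip

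The step I expect to require the most care is the construction of the \'etale marked cover and the verification that the resulting map descends holomorphically to the coarse moduli space; once that is in place, the algebraicity follows as a direct application of the Baily--Borel and Borel theorems.
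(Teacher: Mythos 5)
Your proof follows essentially the same route as the paper: well-definedness by comparing two markings via the pointwise stabilizer of $\eta_i(N)$ in $\OO(\Lambda)$, holomorphy by passing to a marked cover and applying local period maps, and algebraicity via Borel's theorem. Two small points where your justification is incomplete: well-definedness also requires checking that isomorphic $N$-polarized pairs $(X,\iota)\cong(X',\iota')$ give the same period point (you only treat the choice of marking for a fixed $(X,\iota)$); and in the last step the care needed in applying Borel's theorem concerns torsion in the target group $\OO(T_i)$ rather than singularities of the source --- Borel's result is stated for torsion-free arithmetic quotients, and the paper handles this by introducing finite level structures so that a finite cover of $M^i_{N,d}$ maps holomorphically to a smooth quotient $\Gamma\setminus D_i$ with $\Gamma\subset\OO(T_i)$ torsion free of finite index.
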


\begin{proof}
Note that $\iota\colon N\,\hookrightarrow \NS(X)$  composed with the inclusion
$\NS(X)\cong H^{1,1}(X,\ZZ)\subset H^2(X,\ZZ)$ and a marking $ H^2(X,\ZZ)\cong\Lambda$
yields a primitive embedding $\eta\colon N\,\hookrightarrow \Lambda$. By definition of $M^i_{N,d}$ this primitive
embedding is equivalent to $\eta_i$ and, hence, there indeed exists a marking  $\varphi$ with $\varphi\circ\iota=\eta_i$.
In particular, the sublattices $T_i=\eta_i(N)^\perp$  and $\varphi(\iota(N))^\perp$ of  $\Lambda$ coincide.
Hence, $\varphi(H^{2,0}(X))\subset T_i\otimes \CC$, which thus defines a point in $D_i$.
Changing $\varphi$ to $\varphi'$ still satisfying $\varphi'\circ\iota=\eta_i$ yields a period in the same
orbit of the $\OO(T_i)$-action on $D_i$. It is easy to check that isomorphic $N$-polarized
$(X,\iota)$ and $(X',\iota')$, i.e.\ both defining the same point in $M^i_{N,d}$, yield the same point in $\OO(T_i)\setminus D_i$.

Introducing markings globally over $M^i_{N,d}$ (by passing to the appropriate principal bundle) and applying local period
maps, one finds that  $\pi_i\colon M^i_{N,d}\to \OO(T_i)\setminus D_i$ is holomorphic.\footnote{This is completely analogous
to the standard arguments, see e.g.\ \cite{Beauv,Dolg,HuyK3}.}
The crucial input now is Borel's result \cite{Bor} which shows that $\pi_ i$ is automatically algebraic. Note that Borel's result only allows
quotients by torsion free groups. However, introducing finite level structures one obtains a finite cover of $M^i_{N,d}$ that then maps
holomorphically to a smooth quotient $\Gamma\setminus D_i$ for some finite index torsion free subgroup $\Gamma\subset \OO(T_i)$.
\end{proof}

\begin{cor}\label{cor:finitefibre}
The fibres of $\pi_i\colon M^i_{N,d}\to \OO(T_i)\setminus D_i$ are finite.
\end{cor}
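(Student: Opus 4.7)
The plan is to combine the algebraicity of $\pi_i$, established in the preceding proposition, with local Torelli for $N$-polarized hyperk\"ahler manifolds, and to conclude via the elementary fact that a closed algebraic subset of a quasi-projective variety is finite if and only if it is discrete.

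Since $\pi_i\colon M^i_{N,d}\to \OO(T_i)\setminus D_i$ is an algebraic morphism between quasi-projective varieties, each fibre $F\coloneqq\pi_i^{-1}([p])$ is a closed algebraic subset of $M^i_{N,d}$ and in particular has only finitely many irreducible components. It therefore suffices to prove that $F$ is zero-dimensional, i.e.\ discrete as a subset of $M^i_{N,d}$.

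For the discreteness step, I would pass to a finite level structure exactly as in the proof of the previous proposition: choose a torsion-free finite-index subgroup $\Gamma\subset \OO(T_i)$, so that $\Gamma\setminus D_i$ is a smooth quasi-projective variety, and obtain a finite \'etale cover $\tilde M\to M^i_{N,d}$ together with an algebraic lift $\tilde\pi\colon \tilde M\to \Gamma\setminus D_i$ of $\pi_i$. The preimage of $[p]$ in $\Gamma\setminus D_i$ is finite and $\tilde M\to M^i_{N,d}$ is finite, so it is enough to show that the fibres of $\tilde\pi$ are discrete. Now \'etale-locally on $\tilde M$, the morphism $\tilde\pi$ lifts to the period map valued in $D_i$ itself. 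By the local Torelli theorem for compact hyperk\"ahler manifolds, restricted to the $N$-polarized deformation space this local period map is an open immersion into $D_i$. Hence the preimage of any single point of $D_i$ is discrete, so $\tilde\pi$, and therefore $\pi_i$, has discrete fibres. Combined with the algebraicity of $F$, this yields $|F|<\infty$.

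The only non-formal input is the assertion that the local period map restricted to the $N$-polarized deformation space is an open immersion into $D_i$, and this is where I expect the reader to pause. It follows, however, from the unobstructedness of hyperk\"ahler deformations together with the standard description of the differential of the period map: the classes in $H^1(X,T_X)$ that preserve $N\subset H^{1,1}$ correspond precisely to the tangent directions in $D$ orthogonal to $N$, and these directions are exactly those tangent to $D_i\subset D$.
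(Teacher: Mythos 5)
Your argument is correct and follows the same route as the paper: the paper's proof is exactly the two-step observation that local Torelli makes the fibres discrete and algebraicity of $\pi_i$ then forces them to be finite. Your additional elaborations (passing to a torsion-free level cover and identifying the tangent space of the $N$-polarized deformation space with the tangent directions of $D_i$) are accurate fillings-in of details the paper leaves implicit.
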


\begin{proof}
By the local Torelli theorem, non-trivial local deformations of $(X,\eta)$ are  detected by their periods, i.e.\ by their images under
$\pi_i$. Thus, the fibres of $\pi_i$ are discrete. Now use that $\pi_i$ is algebraic, which immediately implies finiteness.
\end{proof}

\subsection{Proofs of Theorem \ref{thm:HK}}\label{sec:proof} As announced in the introduction, we shall now 
present a proof of the finiteness of $M_d(X_0)$ for hyperk\"ahler manifolds $X_0$
that avoids the Kawamata--Morisson cone conjecture and, in fact, the global Torelli theorem. We shall also
sketch an argument that uses the global Torelli theorem directly. For a third proof via the Kuga--Satake construction
see Section \ref{sec:KS}.

\medskip

-- Let $X_0$ be a compact hyperk\"ahler manifold which is assumed to be projective.
Hence, $N\coloneqq \NS(X_0)\cong H^{1,1}(X_0,\ZZ)\subset H^2(X_0,\ZZ)\cong\Lambda$
is a primitive sublattice of signature $(1,\rho(X_0)-1)$ (which by \cite{HuyHK} is known to be equivalent to the 
projectivity of $X_0$). 

The set $M_d(X_0)\subset {M}_d$
of all polarized $(X,L)$ of degree $d$ with $X\cong X_0$ is contained in the image
of the map (\ref{eqn:morph}). So, let $(X,\iota)\in M_{N,d}$ with $X\cong X_0$. Then
$\iota\colon N\congpf \NS(X)$, because an embedding of abstractly isomorphic lattices is automatically an isomorphism.
Hence, $(X,\iota)$ and $(X_0,\iota_0\colon N\stackrel{=}{\to}\NS(X_0))$ are both contained
in the same part $M^i_{N,d}\subset M_{N,d}$. Moreover, picking an isomorphism $f\colon X\congpf X_0$ yields
a Hodge isometry $f^*\colon\NS(X_0)^\perp\congpf \NS(X)^\perp$, which shows that 
$(X,\iota)$ and $(X_0,\iota_0)$ have the same image under $\pi_i\colon M^i_{N,d}\to \OO(T_i)\setminus D_i$.

Therefore, $M_d(X_0)$ is contained in the image under the map (\ref{eqn:morph}) of a fibre of $\pi_i$.
Now use Corollary \ref{cor:finitefibre}, to conclude the finiteness of $M_d(X_0)$.\qed

\begin{remark}
For Calabi--Yau threefolds a similar idea has been exploited by Szendr\H{o}i \cite{Szendroi}
and it seems plausible that his arguments can be generalized to cover Calabi--Yau manifolds
of arbitrary dimensions. Instead of Baily--Borel, so the algebraicity of the holomorphic period map,
he applies Griffiths' extension theorem \cite{CGGH} which ensures the existence of a proper holomorphic extension.

In fact, quite generally, quasi-finiteness can alternatively be deduced from the extension theorem.
See \cite{JL} for an in depth discussion of various aspects of the quasi-finiteness of  period maps and references.
\end{remark}

--  As has been mentioned, Theorem \ref{thm:HK} can also be
deduced from the global Torelli theorem which in turn relies on the existence
of Ricci-flat metrics, twistor spaces, etc. Here is a quick outline of the argument.
The reader may, for simplicity, restrict to the case of K3 surfaces.

The group of diffeomorphisms acts by a finite index subgroup on $H^2(X_0,\ZZ)$, often called the monodromy
group ${\rm Mon}(X_0)\subset \OO(H^2(X_0,\ZZ))$. For K3 surfaces the index has been determined
in \cite{Borcea} (finiteness was known to Weil). The same argument combined with \cite{HuyFin} proves finitess for arbitrary hyperk\"ahler manifolds. The subgroup respecting the Hodge structure
yields a finite index subgroup ${\rm Mon}_{\rm Hdg}(X_0)\subset\OO(\NS(X_0))$. (The finite index of the
inclusion $\NS(X_0)\oplus\NS(X_0)^\perp\subset H^2(X_0,\ZZ)$ intervenes here.) 
We know that $\OO(\NS(X_0))$ acts with finitely many orbits on the set
of all $\ell\in\NS(X_0)$ with fixed $(\ell)^2=d$, see Remark \ref{rem:Kneser}.
The last step now is to describe the image of $\Aut(X_0)\to \OO(\NS(X_0))$ as the subgroup of finite index
of all $g\in \OO(\NS(X_0))$ such that $g$ maps (at least) one K\"ahler class again to a K\"ahler class.
This last assertion is part of  the global Torelli theorem, cf.\ \cite[Ch.\ 15]{HuyK3} for the case of K3 surfaces
and \cite{Mar,Verb} for the higher-dimensional case.\qed

\begin{cor}
Let $\pi\colon(\kx,\kl)\to T$ be a polarized family of compact hyperk\"ahler manifolds (e.g.\ of polarized K3 surfaces) over a quasi-projective
base $T$. Then for any $X_0$ the set $\{t\in T\mid \kx_t\cong X_0\}$ is Zariski closed.\qed
\end{cor}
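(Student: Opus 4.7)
The plan is to reduce the corollary to Theorem \ref{thm:HK} via a classifying morphism to the coarse moduli space. Since the Beauville--Bogomolov square $q(\kl_t)$ is a locally constant, integer-valued function on $T$, the polarization degree $(\kl_t)^{2n}$ is constant on each connected component. As $T$ is quasi-projective, hence has only finitely many connected components, we may treat each component separately and assume that the degree equals a single fixed integer $d$. The polarized family then determines an algebraic classifying morphism
\[
\varphi \colon T \longrightarrow M_d
\]
to the quasi-projective coarse moduli space of polarized compact hyperk\"ahler manifolds of the relevant deformation type, as constructed in Proposition \ref{prop:DMcoarse}; algebraicity is immediate from the universal property of $M_d$ as a coarse moduli space for the Deligne--Mumford stack $\km_d$.

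Next, by the very definition of $M_d(X_0) = \{(X,L) \in M_d \mid X \cong X_0\}$, the set in question is exactly a preimage:
\[
\{t \in T \mid \kx_t \cong X_0\} \;=\; \varphi^{-1}\bigl(M_d(X_0)\bigr).
\]
Theorem \ref{thm:HK} asserts that $M_d(X_0)$ is finite, so it is Zariski closed in $M_d$. Its preimage under the algebraic morphism $\varphi$ is therefore Zariski closed in $T$, which is precisely the assertion. If $X_0$ is not of the same deformation type as the fibers of $\pi$, the set is vacuously empty and there is nothing to prove.

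There is essentially no hard step beyond Theorem \ref{thm:HK} itself. The only point worth verifying is that the polarized family really produces an \emph{algebraic} morphism to $M_d$, rather than merely a holomorphic or set-theoretic one; this is automatic from the construction of $M_d$ as a coarse moduli space for $\km_d$ together with Viehweg's quasi-projectivity result cited in the proof of Proposition \ref{prop:DMcoarse}. The remainder is the standard fact that preimages of finite (hence closed) sets under morphisms of varieties are closed.
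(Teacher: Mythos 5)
Your proof is correct and is exactly the intended argument: the paper offers no separate proof (the corollary is stated with an immediate \qed), since it follows directly from Theorem \ref{thm:HK} by pulling back the finite (hence Zariski closed) set $M_d(X_0)$ along the classifying morphism $T\to M_d$ furnished by the coarse moduli space. Your handling of the connected components and of the degenerate case where $X_0$ is not of the fibers' deformation type is a reasonable tidying-up of details the paper leaves implicit.
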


\begin{cor}
Let $N$ be a lattice of signature $(1,m)$ and $\ell\in N$ a primitive class. Then for any compact hyperk\"ahler manifold
$X_0$ there exist, up to the action of $\Aut(X_0)$, at most finitely many isometric
embeddings $N\,\hookrightarrow \NS(X_0)$ mapping $\ell$ to an ample class.
\end{cor}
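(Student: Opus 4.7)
The plan is to fibre the set of embeddings over the set of ample classes representing $\ell$, using Theorem~\ref{thm:HK} on the base and a definiteness argument on each fibre. Put $d\coloneqq(\ell)^2$; since $N$ has signature $(1,m)$ and $\iota(\ell)$ must be ample, one necessarily has $d>0$, and consequently $\ell^\perp\subset N$ is negative definite. Any admissible $\iota$ sends $\ell$ into
\[
A\coloneqq\{L\in\NS(X_0)\mid L\text{ ample},\ (L)^2=d\},
\]
and the natural $\Aut(X_0)$-action on $A$ is compatible with its action on embeddings via post-composition.

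I would first invoke Theorem~\ref{thm:HK}. The fibre $M_d(X_0)$ of $M_d\to M_d/_\sim$ over $X_0$ is naturally identified with $A/\Aut(X_0)$, exactly as recalled for K3 surfaces in the introduction, so the theorem yields only finitely many $\Aut(X_0)$-orbits in $A$. Pick representatives $L_1,\ldots,L_k$ of these orbits; any $\iota$ in question is $\Aut(X_0)$-equivalent to one with $\iota(\ell)=L_i$ for some $i$.

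Next I would fix one $L_i$ and bound the set of embeddings $\iota\colon N\hookrightarrow\NS(X_0)$ with $\iota(\ell)=L_i$. Any such $\iota$ restricts to an isometric embedding $\ell^\perp\hookrightarrow L_i^\perp\cap\NS(X_0)$ and, since $N\otimes\QQ=\QQ\ell\oplus(\ell^\perp\otimes\QQ)$, is determined by this restriction together with the assignment $\ell\mapsto L_i$. The lattice $L_i^\perp\cap\NS(X_0)$ is negative definite, because $\NS(X_0)$ has signature $(1,\rho(X_0)-1)$ and $(L_i)^2>0$; hence Remark~\ref{rem:Kneser}(iii) produces only finitely many isometric embeddings $\ell^\perp\hookrightarrow L_i^\perp\cap\NS(X_0)$, and in particular only finitely many $\iota$ with $\iota(\ell)=L_i$.

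Combining the two finite counts yields the corollary. I do not anticipate a serious obstacle, as the argument is essentially a bookkeeping reduction to Theorem~\ref{thm:HK} plus Kneser-type finiteness into a definite lattice; the one minor subtlety is that not every embedding $\ell^\perp\hookrightarrow L_i^\perp\cap\NS(X_0)$ need extend to an \emph{integral} embedding of $N$ into $\NS(X_0)$, but this merely trims the finite count and is harmless.
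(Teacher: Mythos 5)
Your argument is correct and coincides with the paper's own proof: both reduce to Theorem \ref{thm:HK} to get finitely many $\Aut(X_0)$-orbits of ample classes of square $(\ell)^2$, and then observe that an embedding sending $\ell$ to a fixed $L$ is determined by the induced map $\ell^\perp\hookrightarrow L^\perp$ into a negative definite lattice, where Remark \ref{rem:Kneser}(iii) gives finiteness. Your closing remark about embeddings of $\ell^\perp$ that fail to extend integrally is a fair observation but, as you say, only shrinks the count.
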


\begin{proof}
The result can be proved in the spirit of the discussion above, by means of
moduli spaces of lattice polarized hyperk\"ahler manifolds and
again using Baily--Borel arguments. It can also be deduced from 
Theorem \ref{thm:HK} directly. Indeed, up to the action of $\Aut(X_0)$ there are only finitely many ample classes
$L\in\NS(X_0)$ with $(L)^2=(\ell)^2$. An isometric embedding $N\,\hookrightarrow \NS(X_0)$ mapping
$\ell$ to a fixed $L$ is then determined by the induced $\ell^\perp\,\hookrightarrow L^\perp$. As $L^\perp$ is negative
definite by the Hodge index theorem, there are only finitely many such embeddings, see Remark \ref{rem:Kneser}.
\end{proof}

\subsection{}\label{sec:unbound}
The cardinality of the `orbits' cannot be bounded. For abelian surfaces this has been observed
in \cite{Hay,La2}. This suggests a similar behavior for the associated Kummer surfaces, but
controlling the degree of the polarizations is technical.
Here, we exhibit an example of polarized K3 surfaces $(S,L)$ of degree $(L)^2=d=4d_0$ for any fixed
odd $d_0>0$ not divisible by any $p^3$ showing that $|M_{d}(S)|$ cannot be bounded.

For this, consider an increasing sequence of primes $p_i\equiv 3\ (4)$. By Siegel's theorem, the sequence
of class numbers $h_i\coloneqq h(p_i)=|{\rm Cl}(K_i)|$ of the imaginary quadratic field $K_i\coloneqq\QQ(\sqrt{-p_i})$
cannot be bounded. Then use the interpretation of ${\rm Cl}(K_i)$ as the set of ${\rm Sl}(2,\ZZ)$-equivalence
classes of binary quadratic forms $aX^2+bXY+cY^2$ of discriminant $-p_i=b^2-4ac$ or, equivalently,
as the set of isomorphism classes of oriented positive definite lattices $\Gamma$ of rank two
with intersection matrix $\left(\begin{matrix}2a&b\\b&2c\end{matrix}\right)$. It is classically known
that two forms are in the same genus if they differ by forms in ${\rm Cl}(K_i)^2$ (Gauss principal genus theorem) and
that under our assumptions in fact  ${\rm Cl}(K_i)^2={\rm Cl}(K_i)$.
cf.\  \cite[Prop.\ 3.11]{Cox}.
 Hence, for all $i$ there exist non-isomorphic
$\Gamma_{i1},\ldots,\Gamma_{ih_i}$ within the same genus.

Hence, the indefinite lattices $\tilde\Gamma_{ij}\coloneqq\Gamma_{ij}\oplus\ZZ(-d_0)$,
$j=1,\ldots,h_i$, of rank three are in the same genus
for fixed $i$. However, the genus of an indefinite  ternary form determines the isomorphism class
assuming that its discriminant is odd and indivisible by any cube,
cf.\ \cite[Ch.\ 15, Thm.\ 21]{Conway}. Hence, $\tilde\Gamma_{i1}\cong\cdots\cong\tilde\Gamma_{ih_i}$
and we shall denote this isomorphism type by $\tilde\Gamma_i$. Then the generators of $\ZZ(-d_0)$
correspond to elements $\alpha_{ij}\in\tilde\Gamma_i$. As their orthogonal complements $\alpha_{ij}^\perp$ are isomorphic to $\Gamma_{ij}$, the orbits $\OO(\tilde\Gamma_i)\cdot\alpha_{ij}\subset\tilde\Gamma_i$
are all distinct. Finally, set $N_i\coloneqq \tilde\Gamma_i(-4)$, which is lattice of
signature $(1,2)$ containing classes $\alpha_{i1},\ldots,\alpha_{ih_i}$ of square
$(\alpha_{ij})^2=d=4 d_0$ with distinct $\OO(N_i)$-orbits. By the
surjectivity of the period map, there exist K3 surfaces $S_i$ with $\NS(S_i)\cong N_i$.
As the lattices $N_i$ do not contain any $(-2)$-classes, up to sign
the $\alpha_{ij}$, $j=1,\ldots,h_i$ correspond to ample line bundles $L_{ij}$ with pairwise distinct
$\OO(\NS(S_i))$-orbits and, hence, pairwise distinct $\Aut(S_i)$-orbits. In other words,
$(S_i,L_{ij})\in M_{d}$, $j=1,\ldots,h_i$, are $h_i$ distinct points, all contained in $M_{d}(S_i)$.

\subsection{}\label{sec:explainGT}  We conclude this section by a few additional remarks.
\smallskip

-- The kind of finiteness we have discussed for K3 surfaces, hyperk\"ahler manifolds, and abelian varieties
does not generalize to arbitrary varieties. In fact, it already fails for blow-ups of K3 surfaces. For a concrete
example, consider an automorphism $f\colon S\congpf S$ of infinite order of a K3 surface $S$.
Then consider the family $\pi\colon\kx\coloneqq {\rm Bl}_\Delta(S\times S)\stackrel{\sigma}{\to}
S\times S\stackrel{p_1}{\to}S$, which over a point $s\in S$ is the blow-up of $S$ in $s$.
Fix a sufficiently ample line bundle $L$ on $S$ and the induced $\pi$-ample line bundle
$p_1^*L(-E)$, where $E\to\Delta$ is the exceptional divisor.
Then for an infinite orbit $\{s_i\coloneqq f^{i}(s)\}$ the infinitely many fibres $\kx_{s_i}$ are all isomorphic.
These isomorphisms will be mostly unpolarized, as $\{f^{i\ast}L\}$ will be infinite for every ample line bundle $L$ on $S$.
Hence, in the corresponding moduli space, the orbit $M_{d-1}({\rm Bl}_s(S))$ will be infinite.


\smallskip

-- There are indeed instances where finiteness can be deduced by combining a local argument, showing discreteness of a
set, with algebraicity. For example, for a polarized K3 surface $(S,L)$ the quasi-projectivity of the Hilbert scheme ${\rm Hilb}^{P}(S\times S)$ 
(of closed subschemes of $S\times S$ with fixed Hilbert polynomial $P(n)=\chi(S,L^{2n})$)  implies
that the group $\Aut(S,L)$ of automorphisms $f\colon S\congpf S$ with $f^*L\cong L$ is finite, cf.\ \cite[Ch.\ 5]{HuyK3}.
Note that the finiteness of the group of polarized automorphisms implies that the moduli stacks discussed previously are
Deligne--Mumford stacks.

\smallskip

\begin{comment}
-- One may wonder whether there is any relation between the number of orbits of
the $\Aut(S_0)$-action on the set of ample line bundles $L$ of given square $(L)^2=d$
and the order of the stabilizer $\Aut(S_0,L_0)$ of any particular such polarization.
In other words, are the subsets $\{(S,L)\mid |M_d(S)|=\text{ const}\}\subset M_d$
and $\{ (S,L)\mid |\Aut(S,L)|=\text{ const}\}\subset M_d$ in any way related?
They are not, as the number $|\Aut(S,L)|$ is known to be bounded for $(S,L)\in M_d$. Indeed,
classified by Mukai's result. Improvements due to Kond\={o} eventually show $|\Aut_s(S,L)|\leq 3840$,
see \cite[Ch.\ 15]{HuyK3} for references. The non-symplectic part is also universally bounded by $66$,
cf.\ \cite[Cor.\ 15.1.14]{HuyK3}.
That there exists a bound at all can be deduced from a standard Hilbert scheme argument, so
eventually relying on the quasi-projectivity of certain moduli spaces.
\end{comment}


\section{Finiteness of hyperk\"ahler metrics and in twistor families}

Let $X$ be a compact hyperk\"ahler manifold, for example a K3 surface $S$, and $\omega\in H^{1,1}(X)$ a K\"ahler
class. Then there exists a unique hyperk\"ahler metric $g$ on $X$ whose K\"ahler form $g(I~,~)$
represents $\omega$. This can in particular be applied to the K\"ahler class provided by the first Chern class of an ample
line bundle $L$ on $X$. The associated hyperk\"ahler metric shall be denoted $g_L$ and then
$(X,g_L)$ is the underlying Riemannian manifold (with the complex structure of $X$ dropped).

We shall first address the question how many polarized hyperk\"ahler manifolds
$(X,L)\in M_d$ of fixed degree realize the same Riemannian manifold, i.e.\ 
for a given $(X_0,L_0)$ we study the set  $$M_d(X_0,g_{L_0})\subset M_d$$  of all 
$(X,L)\in M_d$ such that there exists an isometry  $(X_0,g_{L_0})\cong (X,g_L)$ 
between the  underlying Riemannian manifolds.

We will then turn to  families `orthogonal' to the moduli spaces $M_d$ provided by
the twistor construction. Recall that to each hyperk\"ahler metric $g$ there exists
an $S^2$ of complex structures compatible with $g$.
This leads to the twistor family $\pi\colon\kx\to\PP^1$ consisting of a complex manifold
$\kx$ with underlying differentiable manifold  $X\times \PP^1$ and the holomorphic
projection $\pi$ to the second factor. Each fibre $\kx_t$, $t\in\PP^1$, comes with an associated K\"ahler
class $\omega_t\in H^{1,1}(\kx_t)$. Altogether they span a positive three-space
$\langle\omega_t\rangle_{t\in\PP^1}\subset H^2(X,\RR)$, which is alternatively described
as $\RR\cdot{\rm Re}(\sigma)\oplus\RR\cdot{\rm Im}(\sigma)\oplus\RR\cdot\omega$. Here, $0\ne\sigma\in H^{2,0}(X)$ is the unique
(up to scaling) holomorphic two-form. We shall denote by $\kx_0$ the fibre that corresponds to $X$
and so $\omega_0=\omega$. Note that the Beauvillle--Bogomolov form is constant on $\{\omega_t\}$
or, in other words, $\int\omega_t^{2n}\equiv{\rm const}$.

 We then ask whether there are fibres $\kx_t$, $t\in\PP^1$,
biholomorphic to $X$ or such that $(\kx_{t},\omega_{t})\cong(X,\omega)$ as K\"ahler manifolds? 
Are the following sets finite:
 $$\{t\in\PP^1\mid (\kx_t,\omega_t)\cong (X,\omega)\}\text{~~ and ~~} \{t\in\PP^1\mid \kx_t\cong X\}?$$ We  prove finiteness in
the first case and for K3 surfaces with CM in the second.

\subsection{}  Consider the twistor family $\kx\to\PP^1$ associated with a 
compact hyperk\"ahler manifold $X$ endowed with a K\"ahler class.

\begin{lem}\label{lem:key}  There exist only finitely many $t\in \PP^1$ such that
the natural K\"ahler class $\omega_t$ is integral, i.e.\ given by an ample line bundle $L_t$ on $\kx_t$.
In particular, if $\omega$ is the class of an ample line bundle $L$, then at most
finitely many polarized fibres $(\kx_t,\omega_t)$ are isomorphic to $(X,L)$.
\end{lem}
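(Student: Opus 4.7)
The plan is to leverage two features of the twistor construction that are already recorded in the excerpt: every K\"ahler class $\omega_t$ sits inside the fixed real three-space $W\coloneqq\RR\cdot{\rm Re}(\sigma)\oplus\RR\cdot{\rm Im}(\sigma)\oplus\RR\cdot\omega\subset H^2(X,\RR)$, and the top self-intersection $\int\omega_t^{2n}$ is independent of $t$. Via the Fujiki relation the latter forces the Beauville--Bogomolov square $q(\omega_t)$ to equal a fixed positive constant $d_0\coloneqq q(\omega)$, so all $\omega_t$ lie on a single sphere of fixed radius inside $W$.

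The key observation is that the intersection $\Lambda_W\coloneqq H^2(X,\ZZ)\cap W$ is a positive definite lattice of rank at most three: as a discrete subgroup of a finite-dimensional space it is free abelian of rank $\leq 3$, and $q|_W$ is positive definite because $W$ is spanned by the hyperk\"ahler triple $(\omega_I,\omega_J,\omega_K)$, which is an orthogonal basis with equal positive BB-square. Applied to $\Lambda_W$, Remark \ref{rem:Kneser}(iii) (or simply the elementary fact that a positive definite lattice carries only finitely many vectors of any prescribed norm) yields the finiteness of $\{v\in\Lambda_W\mid q(v)=d_0\}$.

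To finish, I would note that the twistor parametrization $\PP^1\to W$, $t\mapsto\omega_t$, is injective: with the hyperk\"ahler metric $g$ fixed along the family, the identity $\omega_t=g(I_t\cdot\,,\cdot)$ recovers the complex structure $I_t$ from $\omega_t$, and $I_t$ in turn determines $t\in\PP^1$. Combining the two steps, $\omega_t\in H^2(X,\ZZ)$ can hold for only finitely many $t$, and whenever it does, $\omega_t\in H^{1,1}(\kx_t)\cap H^2(\kx_t,\ZZ)$ is an integral K\"ahler class and therefore of the form $c_1(L_t)$ for an ample $L_t$ on $\kx_t$. The ``in particular'' assertion follows immediately: any isomorphism $(\kx_t,\omega_t)\cong (X,L)$ of polarized K\"ahler manifolds sends $\omega_t$ to the integral class $c_1(L)$, so $\omega_t$ must itself be integral and $t$ belongs to the finite set just constructed. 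The only points requiring care are the positive definiteness of $q|_W$ and the constancy of $q(\omega_t)$, but both are built into the hyperk\"ahler structure and are not genuine obstacles; once they are in hand, the argument is essentially a geometry-of-numbers statement on a positive definite ternary lattice.
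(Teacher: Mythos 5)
Your proposal is correct and follows essentially the same route as the paper: both arguments intersect the positive three-space $\RR\cdot{\rm Re}(\sigma)\oplus\RR\cdot{\rm Im}(\sigma)\oplus\RR\cdot\omega$ with $H^2(X,\ZZ)$ to get a positive definite lattice of rank $\leq 3$, use the constancy of $q(\omega_t)$ to reduce to the finiteness of lattice vectors of fixed square, and conclude via the fact that $\omega_t$ determines $t$. The extra details you supply (the Fujiki relation, the orthogonal hyperk\"ahler triple as a basis of $W$, recovering $I_t$ from $\omega_t$ and $g$) are all correct and merely make explicit what the paper leaves implicit.
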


\begin{proof}
The positive three-space $\langle\omega_t\rangle=(H^{2,0}\oplus H^{0,2})(X,\RR)\oplus\RR\cdot\omega$
intersects the lattice $H^2(X,\ZZ)$ in a positive definite lattice (of rank $\leq 3$). As the number of classes
of fixed square in a positive definite lattice is finite, the set
$\{\omega_t\}\cap H^2(X,\ZZ)$ is finite. As the class $\omega_t$ determines $t\in\PP^1$
up to complex conjugation, for only finitely many fibres $\kx_t$ the class $\omega_t$ can be integral.
\end{proof}

\begin{remark}\label{rem:Lagr}
Note that in most cases  the natural class $\omega_t$ on $\kx_t$  will be
integral or  just rational for only two of the fibres. Indeed, $\omega_t\in P\coloneqq
\RR\cdot{\rm Re}(\sigma)\oplus\RR\cdot{\rm Im}(\sigma)\oplus\RR\cdot\omega$
and  $P\cap H^2(X,\QQ)=\QQ\cdot\omega$ for very general
$(X,\omega\coloneqq{\rm c}_1(L))$ and then only the fibres corresponding to $X$ and its complex conjugate 
(corresponding to the positive plane $(H^{2,0}\oplus H^{0,2})(X,\RR)$ with reversed orientation) have rational $\omega_t$. 
At the other extreme, $P$ is defined over $\QQ$ if and only if $\rho(X)=h^{1,1}(X)$.

The remaining cases with $P\cap H^2(X,\QQ)$ of dimension two
are parametrized by a countable union of real Lagrangians in $M_d$.
Let us spell this out for K3 surfaces. Define $\Lambda_d\coloneqq \ell^\perp\subset\Lambda$ as
the orthogonal complement of a primitive class $\ell$ in the K3 lattice $\Lambda$ with $(\ell)^2=d$. Then
$M_d$ is an open subset of the arithmetic quotient $\tilde\OO(\Lambda_d)\setminus D$
of the period domain $D\subset\PP(\Lambda_d\otimes\CC)$ (viewed as the set of 
positive oriented planes in $\Lambda_d\otimes\RR$) by the stabilizer of $\ell$, cf.\ \cite[Ch.\ 6]{HuyK3}. Now,
for any $\alpha\in\Lambda_d$ with $(\alpha)^2>0$ consider the positive cone
$\kc_{\alpha^\perp}\subset\alpha^\perp\otimes\RR$. Note that $\alpha^\perp$ is of signature $(1,19)$.
The image $\kl_\alpha\subset M_d$ of the natural map
$$\kc_{\alpha^\perp}/\RR^*\,\hookrightarrow D\to \tilde\OO(\Lambda_d)\setminus D,$$
that sends $\beta\in\kc_{\alpha^\perp}$ to the plane spanned by $\alpha$ and $\beta$,
or rather its intersection with the open set $M_d$, describes the set of all polarized
K3 surfaces $(S,L)$ with $\alpha\in (H^{2,0}\oplus H^{0,2})(S,\ZZ)$. Each of the countably many $\kl_\alpha\subset M_d$ is of real dimension $19$ and isotropic with respect to the natural symplectic form on $M_d$. Compare this to \cite[Rem.\ 5.7]{BL}.
\end{remark}

Let us rephrase Lemma \ref{lem:key} more algebraically in the case of K3 surfaces.
Consider the moduli space $M_d$ of polarized K3 surfaces  $(S,L)$
of degree $d$. For $(S_0,L_0)\in M_d$ let $M_d(S_0,g_{L_0})\subset M_d$ be the set of polarized K3 surfaces $(S,L)$ such that the underlying Riemannian manifold $(S,g_L)$ is isometric to $(S_0,g_{L_0})$.

\begin{cor}\label{cor:Omet}
The set $M_d(S_0,g_{L_0})\subset M_d$ is always finite.
For a very general $(S_0.L_0)$ the set $M_d(S_0,g_{L_0})$
consists of $(S_0,L_0)$ and its conjugate $(\bar S_0,L_o^*)$.
However, there exist polarized K3 surfaces $(S_0,L_0)\in M_d$
with $|M_d(S_0,g_{L_0})|>2$.
\end{cor}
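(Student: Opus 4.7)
Proof proposal. The plan is to identify $M_d(S_0, g_{L_0})$ with the integer K\"ahler classes of square $d$ on the twistor $2$-sphere of $(S_0, \omega_0)$ and then to apply Lemma \ref{lem:key}. The key input is that on a K3 surface---indeed on any irreducible hyperk\"ahler manifold---the complex structures on the underlying smooth manifold compatible with the hyperk\"ahler metric form exactly the twistor $S^2$. This is a consequence of the irreducibility of the holonomy and is the main technical ingredient of the argument.

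Granted this input, suppose $(S, L) \in M_d(S_0, g_{L_0})$ and choose an isometry $f \colon (S_0, g_{L_0}) \to (S, g_L)$. Then $f^{\ast} I_S$ is a complex structure on $S_0$ compatible with $g_{L_0}$, hence equal to the complex structure $I_t$ of some twistor fibre $\ks_t$; in particular, $f$ becomes a biholomorphism $\ks_t \to S$ under which $f^{\ast} c_1(L) = \omega_t$ is an integer class of square $d$. Conversely, each integer K\"ahler class $\omega_t$ on the twistor sphere yields an ample line bundle $\kl_t$ on $\ks_t$ with $(\ks_t, \kl_t) \in M_d(S_0, g_{L_0})$. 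Therefore $M_d(S_0, g_{L_0})$ is the image of the finite set $\{ t \in \PP^1 \mid \omega_t \in H^2(S_0, \ZZ) \}$ (finite by Lemma \ref{lem:key}) under $t \mapsto (\ks_t, \omega_t)$, and the finiteness assertion follows.

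For a very general $(S_0, L_0)$, Remark \ref{rem:Lagr} yields $P \cap H^2(S_0, \QQ) = \QQ \cdot \omega_0$, so the only integer classes of square $d$ on the twistor sphere are $\pm \omega_0$. These correspond respectively to $(S_0, L_0)$ itself and to the antipodal fibre $(\bar S_0, L_0^{\ast})$, giving the two claimed elements.

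To exhibit $(S_0, L_0)$ with $|M_d(S_0, g_{L_0})| > 2$, I would take $S_0$ a singular K3 surface ($\rho(S_0) = 20$), for which the rational structure of $P = T(S_0) \otimes \RR \oplus \RR \cdot \omega_0$ is nontrivial. Concretely, choose $S_0$ with transcendental lattice $T(S_0)$ of Gram matrix $\mathrm{diag}(2, 2)$ and a polarization $\omega_0 \in \NS(S_0)$ with $(\omega_0)^2 = 4$: the vectors $\pm (1, \pm 1) \in T(S_0)$ then have square $4$ and are orthogonal to $\omega_0$, producing additional integer K\"ahler classes on the twistor sphere. The residual point---where the main remaining difficulty lies---is to verify that the associated twistor fibres yield polarized K3 surfaces genuinely distinct from $(S_0, L_0)$ and its conjugate; this follows because their $H^{2,0}$-line lies in a rational $2$-plane of $P \otimes \CC$ different from $T(S_0) \otimes \CC = (H^{2,0} \oplus H^{0,2})(S_0)_{\CC}$, so that the Hodge structures, and hence the points in $M_d$, are distinct.
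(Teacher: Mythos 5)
Your reduction to the twistor family and Lemma \ref{lem:key} is exactly the paper's argument (the paper leaves the ``compatible complex structures $=$ twistor $S^2$'' input implicit, which you rightly make explicit), and your treatment of the very general case via Remark \ref{rem:Lagr} also matches. The gap is in the last step, and it is precisely at the point you yourself flag as the residual difficulty. To conclude $|M_d(S_0,g_{L_0})|>2$ you must show that the extra twistor fibres $\ks_t$ are not isomorphic (or complex conjugate) to $S_0$ \emph{as K3 surfaces}, and the argument you offer --- that their $H^{2,0}$-lines lie in different rational $2$-planes of $P\otimes\CC$ inside the fixed lattice $H^2(S_0,\ZZ)$ --- does not give this. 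Distinct period points in $\PP(\Lambda\otimes\CC)$ only yield distinct points of $M_d$ modulo the arithmetic group action; an isomorphism $\ks_t\congpf S_0$ would induce a Hodge isometry of $H^2$ that need not be the identity and could carry one rational plane to the other. (Indeed, by design these fibres all have Hodge-isometric transcendental lattices up to finite index, so ``different rational planes'' is very far from an obstruction.) What is needed is an actual isomorphism invariant, and the paper uses the transcendental lattice: for its example $T(S_0)=\left(\begin{smallmatrix}2&1\\1&2\end{smallmatrix}\right)$, $d=2$, the fibre attached to $e_1$ has $T(\ks_t)\cong\ZZ(e_1-2e_2)\oplus\ZZ e_3$, whose discriminant differs from that of $T(S_0)$, ruling out both isomorphism and conjugation.

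Your own example can be repaired the same way, with two additions. First, you must assume $T(S_0)\oplus\ZZ\cdot\omega_0\subset H^2(S_0,\ZZ)$ is primitive (the paper states this hypothesis explicitly); otherwise the integral classes on the twistor sphere live in an unknown overlattice and the computation of the new fibres' transcendental lattices is not under control. Second, replace the rational-plane argument by the discriminant comparison: for $\alpha=e_1+e_2$ in $T(S_0)=\mathrm{diag}(2,2)$ with $(\omega_0)^2=4$, one gets $T(\ks_t)=\ZZ(e_1-e_2)\oplus\ZZ\omega_0\cong\mathrm{diag}(4,4)$ of discriminant $16$, versus discriminant $4$ for $T(S_0)$, so $\ks_t\not\cong S_0,\bar S_0$. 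With these two fixes your example goes through and the proof is complete.
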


\begin{proof}
Indeed, $(S,L)\in M_d(S_0,g_{L_0})$ if and only if $S$ and $S_0$ with the K\"ahler
classes induced by $L$ and $L_0$ are isomorphic to fibres $(\ks_t,\omega_t)$
and $(\ks_0,\omega_0)$ of one twistor family $\ks\to\PP^1$. However, as explained above,
for only finitely many fibres of the twistor family associated with $(S_0,\omega_0)$  the K\"ahler class $\omega_t$ can be integral.
In fact, as discussed in Remark \ref{rem:Lagr}, for $(S_0,L_0)$ in the complement of the countable union
$\bigcup\kl_\alpha\subset M_d$ of real Lagrangians, only for the fibres $\ks_t$ corresponding to $S_0$ and to its conjugate $\bar S_0$ the
class $\omega_t$ will be integral and thus correspond to the K\"ahler class of the form $g_L$.

To construct examples with more interesting $M_d(S_0,g_{L_0})\subset M_d$, take $(S_0,L_0)$ with  $(L_0)^2=2$,
$T(S_0)=\left(\begin{matrix}2&1\\1&2\end{matrix}\right)$, and such that
$T(S_0)\oplus \ZZ\cdot L\subset H^2(S_0,\ZZ)$ is primitive. Then the fibres $(\ks_t,\omega_t)$  of the form $(S,L)\in M_2$ correspond to
elements $\alpha\in T(S_0)\oplus\ZZ(2)$ with $(\alpha)^2=2$. For example, $S_0$ corresponds to the basis vector $e_3$ and another
fibre $\ks_t$ corresponds to the standard vector $e_1$. As their orthogonal complements $T(S_0)$ and
$T(\ks_t)\cong\ZZ\cdot (e_1-2e_2)\oplus\ZZ\cdot e_3$ have distinct discriminants, the two fibres are not isomorphic or complex conjugate
to each other. Hence, in this case $|M_d(S_0,g_{L_0})|>2$.
\end{proof}

It should be possible, using the above construction, to show that $|M_d(S_0,g_{L_0})|$
is unbounded for varying $(S_0,L_0)\in M_d$ and fixed $d$, analogously to Section \ref{sec:unbound}.

\begin{remark}
The two sets $M_d(S_0),M_d(S_0,g_{L_0})\subset M_d$ associated with a polarized K3
surface $(S_0,L_0)\in M_d$ are not related and, in particular, not contained in each other.
Indeed, $|M_d(S_0,g_{L_0})|\leq2$ for all K3 surfaces $S_0$ with $(H^{2,0}\oplus H^{0,2})(S_0,\QQ)=0$.
However,  the latter condition is unrelated to the question how many non-isomorphic polarizations $L$ on
$S_0$ there are with $(L)^2=(L_0)^2$.
Also, the $M_d(S_0)$ come in algebraic families parametrized by quasi-projective varieties, see Corollary \ref{cor:parOS},
 whereas the $M_d(S_0,g_{L_0})$ come in families parametrized by the Lagrangians $\kl_\alpha$ discussed above.
\end{remark}
\subsection{}\label{sec:twistorfini} The lattice theory in the unpolarized situation is more involved and we will restrict for simplicity again
to the case of K3 surfaces. 

Let $T$ be a lattice of signature $(2,n-2)$ with a fixed basis $\gamma_1,\ldots,\gamma_n\in T$.
We consider Hodge structures of K3 type on $T$. Up to scaling, such a Hodge structure is given by a class
$\sigma\in T\otimes\CC$ with $(\sigma)^2=0$ and $(\sigma.\bar\sigma)>0$. We let
$\sigma_i\coloneqq (\gamma_i.\sigma)$, where we may choose $\sigma$ such that $\sigma_1=1$ (after permuting the $\gamma_i$ if necessary or
by assuming $(\gamma_1)^2>0$ from the start).
The \emph{period field} of $\sigma$ is defined as $$K_\sigma\coloneqq\QQ(\sigma_i)\subset\CC,$$
which is also generated by the coordinates of $\sigma$. Alternatively, $K_\sigma=\QQ(\mu_i)$, where $\sigma=\sum\mu_i\cdot\gamma_i$. 

The Hodge structure determined by $\sigma$
 is \emph{general} if there does not exist
a proper primitive sublattice $T'\subset T$ with $\sigma\in T'\otimes\CC$.

Consider now an isometric embedding $\varphi\colon T\,\hookrightarrow T\oplus\ZZ\cdot e$ with $(e)^2=d>0$ and such
that 
\begin{equation}\label{eqn:varphi}
\varphi_\CC(\sigma)=\lambda\cdot \sigma+\lambda'\cdot\bar\sigma+\nu\cdot e.
\end{equation} 

\begin{lem}\label{lem:CMlambda}
Assume that $\sigma\in T\otimes\CC$ defines a general Hodge structure of K3 type on $T$ such that
$K_\sigma$ is a subfield of a CM field $K$. Then there exist at most finitely many isometric embeddings $\varphi\colon T\,\hookrightarrow T\oplus\ZZ\cdot e$ satisfying (\ref{eqn:varphi}).
\end{lem}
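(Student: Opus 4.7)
The plan is to package each candidate $\varphi$ into an arithmetic triple $(\lambda,\lambda',\nu)\in K^3$ and then exploit the CM structure of $K$ to bound this triple at every archimedean place of $K^+$. First I would write $\varphi(x)=M(x)+\beta(x)e$ with $M\colon T\to T$ a $\ZZ$-linear map and $\beta\colon T\to\ZZ$ a linear functional, so that the period hypothesis (\ref{eqn:varphi}) becomes $M(\sigma)=\lambda\sigma+\lambda'\bar\sigma$ and $\beta(\sigma)=\nu$. The initial observations are that $\lambda,\lambda',\nu$ in fact lie in $K$ (using $\sigma\in T\otimes K$, the $K$-linear independence of $\sigma$ and $\bar\sigma$---which follows from $(\sigma,\bar\sigma)>0$---and the $\QQ$-linearity of $M$), and that the generality of $\sigma$, i.e.\ the $\QQ$-linear independence of the $\mu_i$, makes both $M\mapsto M(\sigma)$ and $\beta\mapsto\beta(\sigma)$ injective. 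Hence $(\lambda,\lambda',\nu)$ determines $\varphi$ and it suffices to bound the number of such triples.

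Next I would extract the algebraic identities forced by the isometry condition. Setting $r\coloneqq(\sigma,\bar\sigma)\in K\cap\RR=K^+$ and letting $c$ denote the CM involution on $K$, the two identities $(\varphi(\sigma),\varphi(\sigma))=0$ and $(\varphi(\sigma),\varphi(\bar\sigma))=r$ expand, after using that $\varphi_K$ commutes with $c$, to
\[
2\lambda\lambda' r+\nu^2 d=0\ \text{ in }\ K,\qquad \bigl(\lambda c(\lambda)+\lambda' c(\lambda')\bigr)r+\nu c(\nu)d=r\ \text{ in }\ K^+.
\]
Applying any real embedding $\tau\colon K^+\to\RR$, extended to $\tilde\tau\colon K\to\CC$, and using $\tau(\alpha c(\alpha))=|\tilde\tau(\alpha)|^2$, the two identities combine to
\[
\bigl(|\tilde\tau(\lambda)|+\epsilon_\tau|\tilde\tau(\lambda')|\bigr)^2=1\qquad\text{with}\qquad \epsilon_\tau=\operatorname{sgn}(\tau(r)),
\]
together with $|\tilde\tau(\nu)|^2d=2|\tilde\tau(\lambda)||\tilde\tau(\lambda')||\tau(r)|$. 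Granting that $r$ is totally positive in $K^+$, one has $\epsilon_\tau=+1$ at every archimedean place, so $|\tilde\tau(\lambda)|,|\tilde\tau(\lambda')|\le 1$ and $|\tilde\tau(\nu)|\le\sqrt{\tau(r)/d}$ uniformly in $\tau$. Since $\lambda,\lambda',\nu$ each lie in a finitely generated $\ZZ$-submodule of $K$---for instance $\nu\in\ZZ\mu_1+\cdots+\ZZ\mu_n$, with $\lambda,\lambda'$ expressible via Cramer's rule in the $\mu_i$ and $\bar\mu_j$---and such submodules are discrete in $K\otimes_\QQ\RR\cong\CC^{[K^+:\QQ]}$ under the Minkowski embedding, uniform archimedean bounds force only finitely many triples, hence only finitely many $\varphi$.

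The hard part will be establishing that $r$ is totally positive in $K^+$. Without it, at any archimedean place $\tau$ where $\tau(r)<0$ the identity degenerates to $\bigl||\tilde\tau(\lambda)|-|\tilde\tau(\lambda')|\bigr|=1$, giving no upper bound on the individual absolute values and breaking the Northcott-type finiteness at that place. This is precisely where the CM hypothesis must enter essentially: the Hermitian form $h(x,y)=(x,c(y))$ on $T\otimes K$ has to be shown to remain positive on $\sigma$ at every archimedean place of $K^+$, forcing $r=h(\sigma,\sigma)$ to be totally positive. I expect this positivity step to be the crux of the proof and the reason that a mere number-field condition on $K_\sigma$, without the CM structure, would not suffice.
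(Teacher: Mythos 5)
Your proposal follows the paper's proof step for step in its overall architecture: encode $\varphi$ by the triple $(\lambda,\lambda',\nu)$, show the triple lies in $K$ with a denominator bounded independently of $\varphi$, bound its absolute values at the archimedean places, invoke Minkowski/Northcott to get finitely many triples, and finally use generality of the Hodge structure to recover $\varphi$ from the triple. The parts you actually carry out are correct and in places a little cleaner than the paper's: the paper only records the single relation (\ref{eqn:varphi2}) coming from $(\varphi(\sigma).\varphi(\bar\sigma))=(\sigma.\bar\sigma)$, whereas you also use $(\varphi(\sigma))^2=0$; and your observation that generality makes both $M\mapsto M(\sigma)$ and $\beta\mapsto\beta(\sigma)$ injective, so that $(\lambda,\lambda',\nu)$ determines $\varphi$ outright, streamlines the paper's two-step ending (first $\lambda,\lambda'$ determine $\psi=p_T\circ\varphi$ via $\sigma\in{\rm Ker}(\psi-\psi')\otimes\CC$, then the isometry property recovers the $e$-component). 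Your denominator bound via Cramer's rule is exactly the paper's ``$N\lambda,N\lambda',N\nu\in\ko_K$ with $N$ independent of $\varphi$''.

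The genuine gap is the one you flag yourself: you never establish that $r=(\sigma.\bar\sigma)$ is totally positive in $K^+$, and your own computation shows that at a place with $\tau(r)<0$ the identities degenerate to $\bigl||\tilde\tau(\lambda)|-|\tilde\tau(\lambda')|\bigr|=1$, which destroys the Northcott argument. So as written you have reduced the lemma to an unproved arithmetic claim rather than proved it. The paper treats this point as a two-line observation rather than as the crux: it rescales, choosing $z$ with $(z\sigma.\bar z\bar\sigma)=1$, and deduces $|g(z)|^2\,g(\sigma.\bar\sigma)=1$, hence $g(\sigma.\bar\sigma)>0$, ``as $g$ commutes with complex conjugation''. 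You should be aware, however, that for $g(z)$ to make sense and for $g$ to commute with conjugation on it one needs $z\in K$ with $z\,c(z)=1/r$, i.e.\ one needs $1/r$ to be a norm from $K$ to $K^+$ --- and norms are automatically totally positive, so this presupposes essentially what is to be shown. Your instinct that this is where the CM hypothesis must do real work is therefore well placed: in the motivating case $\dim_K(T\otimes\QQ)=1$, the orthogonal decomposition of $T\otimes\RR$ into definite $2$-planes indexed by the real places of $K^+$, of which exactly one is positive by the signature $(2,n-2)$, shows that $g(\sigma.\bar\sigma)=(g(\sigma).\overline{g(\sigma)})$ changes sign as $g$ moves the period off the positive plane once ${\rm rk}\,T\geq 4$. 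To close your gap (and to make the paper's own argument airtight) one either has to justify the positivity in the precise setting at hand or replace it by an argument exploiting the integrality of $\varphi$ on all of $T$, not merely its restriction to the plane spanned by $\sigma$ and $\bar\sigma$.
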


\begin{proof}
Consider an isometric embedding $\varphi$ satisfying (\ref{eqn:varphi}). We think of $\varphi$ in
terms of the integral matrix $(a_{ij}| b_j)$, where $\varphi(\gamma_i)=\sum a_{ij}\cdot \gamma_j+b_i\cdot e$ with $\gamma_i$ a basis
of $T(S)$ as above. Then $\varphi_\CC(\sigma)$ corresponds to $(a_{ij}|b_j)\cdot (\mu_i)$ and (\ref{eqn:varphi}) becomes the system of equations
$\sum a_{ij}\mu_j=\lambda\mu_i+\lambda'\bar\mu_i$, $i=1,\ldots,n$, and $\sum b_j\mu_j=\nu$. This shows $\lambda,\lambda',\nu\in K$.
Indeed, $\lambda,\lambda'$ can be written as a rational linear combinations of the $\mu_i,\bar\mu_i$ unless $\det\left(\begin{matrix}\mu_i&\bar\mu_i\\
\mu_j&\bar\mu_j\end{matrix}\right)=0$, i.e.\  $\mu_i\bar\mu_j\in\RR$, for all $i\ne j$. But then $1=\sigma_1=\sum(\mu_i\gamma_i.\gamma_1)$
implies $\bar\mu_j=\sum\mu_i\bar\mu_j(\gamma_i.\gamma_1)\in\RR$ for all $j$, which would yield the contradiction $\sigma=\bar\sigma$.
Eventually use that $K_\sigma=\QQ(\mu_i)$ is contained in the CM field $K$ which is closed under complex conjugation.

Next observe that, as $a_{ij},b_j\in\ZZ$, there exists an $N\in \ZZ$ independent of $\varphi$ such that $N\lambda,N\lambda',N\nu\in\ko_{K}$.
As $\varphi$ is an isometry, one also has $(\sigma.\bar\sigma)=(\varphi(\sigma).\varphi(\bar\sigma))$, which translates into
\begin{equation}\label{eqn:varphi2}
|\lambda|^2+|\lambda'|^2+|\nu|^2(d/(\sigma.\bar\sigma))=1.
\end{equation}


As any embedding $g\colon K\,\hookrightarrow \CC$ commutes with complex conjugation, 
$g$ applied to (\ref{eqn:varphi2}) also shows $|g(\lambda)|^2+|g(\lambda')|^2+|g(\nu)|^2(d/g(\sigma.\bar\sigma))=1$.
Observe that $g(\sigma.\bar\sigma)>0$ and that, therefore, the last summand is non-negative. Indeed, choose $z\in \CC$ such that $(z\sigma,\bar z\bar\sigma)=1$. Then
$|g(z)|^2\,g(\sigma.\bar\sigma)=(g(z\sigma),g(\bar z\bar \sigma))=1$, as $g$ commutes with complex conjugation.
Hence, for $N\lambda,N\lambda'\in\ko_K$ one has $|g(N\lambda)|\leq N$ and $|g(N\lambda')|\leq N$ for all $g\colon K\,\hookrightarrow\CC$.
By Minkowski theory there are only finitely many such $N\lambda,N\lambda'\in{\mathcal O}_{K}$. 

As there are only finitely many possibilities for  $\lambda$ and $\lambda'$, it suffices to show that they determine $\varphi$ essentially
uniquely. Indeed, if $\varphi$ and $\varphi'$ are both isometric embeddings satisfying (\ref{eqn:varphi}) with the same $\lambda,\lambda'$, then
$\sigma\in{\rm Ker}(\psi-\psi')\otimes\CC$, where $\psi,\psi'\colon T\to T$ are the compositions of $\varphi$, $\varphi'$ with the projection to $T$. Hence, using the assumption that the Hodge structure is general, one finds
$\psi=\psi'$. Using that $\varphi$ and $\varphi'$ are both isometric embeddings allows one to conclude.
\end{proof}

\begin{remark}\label{rem:finiteunity}
The proof shows more. It allows one to control the number of  isometric embeddings $\varphi\colon T\,\hookrightarrow T\oplus\ZZ\cdot e$ satisfying (\ref{eqn:varphi}). Indeed, up to a certain sign, such a $\varphi$ is determined by a root of unity in $\ko_{K_\sigma}$. Hence, the number of such maps $\varphi$
is  bounded by twice the number of roots of unity in ${\mathcal O}_{K_\sigma}$, which is bounded from above by a number
depending only on $[K_\sigma:\QQ]$. 
\end{remark}

\begin{remark}\label{rem:relax}
The hypotheses can be relaxed a little. For example, one can consider isometric embeddings of $T$ into a fixed finite index
overlattice of $T\oplus\ZZ\cdot e$ (e.g.\ the saturation of $T(S)\oplus\ZZ\cdot e$ in $H^2(S,\ZZ)$). Indeed, instead of working with $\varphi$ in the proof above one uses
$n\cdot \varphi$, where $n$ is the index of the overlattice, and then replaces the left hand side of (\ref{eqn:varphi2}) by $n^2$. 
 Also, $(e)^2\in2\ZZ_{>0}$ can be replaced by $(e)^2\in K_\sigma$ satisfying $g((e)^2)\in\RR_{>0}$ for
 all embeddings $g\colon K_\sigma\,\hookrightarrow\CC$.
\end{remark}

Consider the twistor space ${\mathcal S}\to\PP^1$ associated with a polarized K3 surface $(S_0,L_0)$.

\begin{prop}\label{prop:finitetwistor}\label{prop:finTwistor}
Assume that $S_0$ is a K3 surface with CM. Then there exist at most finitely many $t\in\PP^1$ such that the fibre
${\mathcal S}_t$ is isomorphic to $S_0$. In fact, there are at most finitely many $t\in\PP^1$ 
such that $\ks_t$ and $S_0$ are Fourier--Mukai partners.
\end{prop}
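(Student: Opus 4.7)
The plan is to translate the existence of a Fourier--Mukai partner (or biholomorphic copy) $\ks_t$ of $S_0$ into an isometric embedding of $T(S_0)$ into a fixed overlattice of $T(S_0)\oplus\ZZ\cdot L_0$ satisfying (\ref{eqn:varphi}), and then to invoke Lemma \ref{lem:CMlambda}. To begin, I fix the standard identification of the underlying differentiable manifolds of all twistor fibres, so that the cohomology lattices $H^2(\ks_t,\ZZ)$ are canonically identified with $H^2(S_0,\ZZ)$. The twistor construction then places the period in
$$
\sigma_t\in\CC\cdot\sigma_0+\CC\cdot\bar\sigma_0+\CC\cdot L_0\subset (T(S_0)\oplus\ZZ\cdot L_0)\otimes\CC,
$$
so the primitive transcendental lattice $T(\ks_t)\subset H^2(S_0,\ZZ)$ sits inside the saturation $\widetilde W$ of $T(S_0)\oplus\ZZ\cdot L_0$ in $H^2(S_0,\ZZ)$, an overlattice of fixed finite index independent of $t$.

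Assuming now that $\ks_t$ is a Fourier--Mukai partner of $S_0$, Orlov's derived Torelli theorem provides a Hodge isometry $\psi\colon T(\ks_t)\congpf T(S_0)$. Composing its inverse with the inclusion $T(\ks_t)\hook\widetilde W$ yields an isometric embedding $\varphi\coloneqq\psi^{-1}\colon T(S_0)\hook\widetilde W$ whose complexification sends $\sigma_0$ to a non-zero scalar multiple of $\sigma_t$, so that
$$
\varphi_\CC(\sigma_0)=\lambda\cdot\sigma_0+\lambda'\cdot\bar\sigma_0+\nu\cdot L_0
$$
for some $\lambda,\lambda',\nu\in\CC$. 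This places us in the setup of Lemma \ref{lem:CMlambda}, with the harmless overlattice modification allowed by Remark \ref{rem:relax}. The CM hypothesis on $S_0$ supplies the remaining ingredients: $\End_{\mathrm{Hdg}}(T(S_0)_\QQ)$ is a CM field $K$ containing the period field $K_{\sigma_0}$, and simplicity of the rational Hodge structure on $T(S_0)$ translates into generality in the sense of the lemma.

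Lemma \ref{lem:CMlambda} then guarantees only finitely many admissible $\varphi$. Each such $\varphi$ pins down the line $\CC\cdot\sigma_t\subset (T(S_0)\oplus\ZZ\cdot L_0)\otimes\CC$, and the twistor period map $\PP^1\to\PP(P\otimes\CC)$ recovers $t\in\PP^1$ from this line. Hence at most finitely many $t\in\PP^1$ yield a Fourier--Mukai partner of $S_0$, and in particular at most finitely many satisfy $\ks_t\cong S_0$.

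The main obstacle is verifying that the CM assumption delivers precisely the two hypotheses required by Lemma \ref{lem:CMlambda}, namely the inclusion $K_{\sigma_0}\subset K$ in a CM field and generality of the Hodge structure on $T(S_0)$, and handling the finite-index discrepancy between $T(S_0)\oplus\ZZ\cdot L_0$ and its saturation in $H^2(S_0,\ZZ)$ uniformly in $t$ (which is exactly what Remark \ref{rem:relax} is designed to cover). Once these are in place, the lemma does the heavy lifting and the desired finiteness follows without further geometric input.
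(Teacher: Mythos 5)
Your proposal follows essentially the same route as the paper: identify the twistor fibres' cohomology with $H^2(S_0,\ZZ)$, use Orlov's derived Torelli theorem to turn a Fourier--Mukai partner $\ks_t$ into an isometric embedding $\varphi\colon T(S_0)\hookrightarrow$ (the saturation of) $T(S_0)\oplus\ZZ\cdot L_0$ satisfying (\ref{eqn:varphi}), invoke Lemma \ref{lem:CMlambda} together with Remark \ref{rem:relax}, and recover $t$ from $\varphi_\CC(\sigma_0)$. The only step you flag but do not carry out, the inclusion $K_{\sigma_0}\subset K=\End_{\rm Hdg}(T(S_0)\otimes\QQ)$, is exactly what the paper verifies by a short computation using $\dim_K(T(S_0)\otimes\QQ)=1$, so your outline is correct and complete in structure.
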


The arguments below only use that the period field $K_\sigma$ is a CM field and, therefore,
in particular algebraic. It is known, that under the additional assumption that $S_0$ is defined
over $\bar\QQ$  the period field  $K_\sigma$ is algebraic if and only if $S_0$ has CM, see \cite{Tret}.

\begin{proof}
Recall that a K3 surface with CM is a projective K3 surface $S_0$ for which the endomorphism field
$K={\rm End}_{\rm Hdg}(T(S_0)\otimes\QQ)$ of endomorphisms of the rational Hodge structure $T(S_0)\otimes\QQ$
given by the transcendental lattice $T(S_0)$ is a CM field with $\dim_K (T(S_0)\otimes\QQ)=1$, see \cite[Ch.\ 3]{HuyK3} for references.
It is known that any K3 surface with CM is defined over $\bar\QQ$, but this will not be used in the argument.

Let $K_\sigma$ be the field generated by the periods $\sigma_i$ of $S_0$ as above.  We claim that $K_\sigma\subset K$. For this it is enough to show
that $(\gamma.\sigma)\in K$ for all $\gamma\in T(S_0)$. As $\dim_K(T(S_0)\otimes\QQ)=1$ by assumption, $T(S_0)$ is
contained in the $\QQ$-span of all $\alpha(\gamma_1)$, $\alpha\in K\subset{\rm End}(T(S_0)\otimes\QQ)$, where $\gamma_1$ is
as above the first vector of a basis $\gamma_i\in T(S_0)$ with $(\sigma.\gamma_1)=1$.
Hence, for $\gamma\in T(S_0)$, $(\sigma.\gamma)$ is a $\QQ$-linear combination of numbers 
of the form $(\sigma.\alpha(\gamma_1))$, $\alpha\in K$. As $(\sigma.\alpha(\gamma_1))=(\alpha'\sigma.\gamma_1)=\alpha'\in K$, this proves $K_\sigma\subset K$. (Recall that $\alpha\mapsto\alpha'$ is an automorphism of the CM field $K$ which under any embedding corresponds to complex conjugation.)

Let us now turn to the proposition itself. Clearly, it is enough to show the finiteness of Fourier--Mukai partners in a twistor family.
Suppose that a twistor fibre $\ks_t$ is derived equivalent to $S_0$, i.e.\ that there exists an exact linear equivalence $\Db(\ks_t)\cong\Db(S_0)$ between their derived categories.
Hence, in addition to the identification of lattices $H^2(\ks_t,\ZZ)=H^2(S_0,\ZZ)$
induced by the twistor diffeomorphism $\ks\cong S_0\times\PP^1$, any chosen
equivalence $\Db(S_0)\congpf\Db(\ks_t)$ provides us with an additional Hodge isometry 
between the transcendental lattices $T(S_0)\congpf T(\ks_t)$. The composition of the two
induces an isometric embedding $\varphi\colon T(S_0)\congpf T(\ks_t)\,\hookrightarrow  H^2(S_0,\ZZ)$ with the additional property
that $\varphi(\sigma)$ is contained in $\CC\cdot\sigma\oplus\CC\cdot\bar\sigma\oplus\CC\cdot e$, where $e\coloneqq {\rm c}_1(L_0)$. Hence, $\varphi(T(S_0))$ is contained in the saturation
of $T(S_0)\oplus\ZZ\cdot e\subset H^2(S_0,\ZZ)$. For simplicity assume that $\varphi\colon T(S_0)\,\hookrightarrow T(S_0)\oplus\ZZ\cdot e$,
but see Remark \ref{rem:relax}.

According to Lemma \ref{lem:CMlambda}, under our assumptions there exist only finitely many such $\varphi$. Here we use that
the transcendental lattice is general, for it is the minimal primitive sublattice containing $\sigma$ in its complexification.
As $\varphi(\sigma)$ determines
$t$ up to sign, only finitely many fibres $\ks_t$ are derived equivalent to $S_0$.
\end{proof}

\begin{remark}\label{rem:132}
Using Remark \ref{rem:finiteunity}, we conclude that for a polarized K3 surface $(S_0,L_0)$ with CM the cardinality of the two finite sets
\begin{equation}\label{eqn:boundnumber}
|\{t\in\PP^1\mid{\mathcal S}_t\cong S_0\}|\text{  and }|\{t\in\PP^1\mid\Db({\mathcal S}_t)\cong\Db(S_0)\}|
\end{equation}
can be bounded by a constant $c(K)$ only depending on the CM field $K={\rm End}_{\rm Hdg}(T(S_0)\otimes\QQ)$.
In fact, as $[K:\QQ]=\rk\, T(S_0)\leq 21$, the Euler function $\varphi(m)$  of the
$m$-th roots of unity that can occur in the proof of Lemma \ref{lem:CMlambda} is bounded by $21$ and hence $m\leq 66$.
Taking complex conjugation into account, this shows that the numbers in (\ref{eqn:boundnumber}) are universally bounded by $132$.
\end{remark}

Note however that infinitely many of the fibres $\ks_t$ may come with a polarization
$L_t$ yielding infinitely many points $(\ks_t,L_t)$ in the moduli space of
polarized K3 surfaces $M_d$ of fixed degree, which could be loosely phrased as saying that
a twistor line usually intersects the quasi-projective moduli space $M_d$ in infinitely many points, only
that the underlying K3 surfaces will not be isomorphic to each other.

\subsection{}
It may be instructive to look at K3 surfaces $S_0$ of maximal Picard number $\rho(S_0)=20$. Those are
known to have CM, cf.\ \cite[Rem.\ 3.3.10]{HuyK3}. In this case, the arguments simplify. Indeed, the transcendental lattice $T(S_0)$ is then positive definite (of rank two) 
and  there exist only finitely many isometric embeddings of $T(S_0)$ into any other fixed positive definite lattice, e.g.\ $T(S_0)\oplus\ZZ\cdot e$. So finiteness in Lemma \ref{lem:CMlambda} follows directly. 

\smallskip

K3 surfaces of maximal Picard rank can also be used to construct  examples of twistor families
with isomorphic distinct fibres. Indeed, if $S_0$ is a K3 surface with $T(S_0)\cong\ZZ(d)^{\oplus 2}$ with
orthogonal basis $e_1,e_2$ and such that there exists a polarization $L$ of degree $d$ for which
$T(S_0)\oplus\ZZ\cdot L\subset H^2(S_0,\ZZ)$ is saturated, then the two fibres $\ks_1,\ks_2$ corresponding to $T(\ks_i)=e_i^\perp$ have  both transcendental lattices isomorphic
to $T(S_0)$. Therefore, by Orlov's result, $\Db(\ks_i)\cong\Db(S_0)$ and, as $\rho(\ks_i)=20$, in fact $\ks_i\cong S_0$. 

\smallskip

A closer inspection of this case also reveals that Proposition \ref{prop:finitetwistor} will be difficult to strengthen. For example, one could ask how many of the  fibres $\ks_t$  are isogenous to $S_0$, i.e.\ such that there exists a Hodge isometry
$H^2(S,\QQ)\cong H^2(\ks_t,\QQ)$ (or, equivalently, a Hodge isometry $T(S)\otimes\QQ\cong T(\ks_t)\otimes\QQ$), or 
how many of them have the same Chow motive $\hh(S_0)\cong \hh(\ks_t)$, see \cite{HuyMot} for the relation between the two notions. However, finiteness fails in these settings.
Indeed, for $\rho(S_0)=20$ the projective fibres $\ks_t$ are up to conjugation uniquely determined
by primitive classes $e_t$ in (the saturation of) $T(S_0)\oplus\ZZ\cdot e\subset H^2(S_0,\ZZ)$. It is essentially
$(e_{t_1})^2/(e_{t_2})^2\in\QQ^\ast/\QQ^{\ast2}$ that decides whether $\ks_{t_1}$ and $\ks_{t_2}$ are isogenous.
So one will usually have infinitely many fibres $\ks_t$ that are isogenous to $S_0$ and infinitely many that are not.

\begin{comment}
\begin{remark}
There is yet another question related to the above, especially to
Theorem \ref{thm:HK} and Corollary \ref{cor:Omet}. For given $X_0$ how
big is the set of $(X,L)\in M_d$ for which the underlying $X$ is isomorphic to
a twistor fibre $\kx_t$ of a twistor family $\kx\to\PP^1$ associated with $X_0$ endowed
with an arbitrary K\"ahler class (and not necessarily the one associated with $L$)?
This set is always infinite, which is not difficult to prove and certainly expected,
as connected components of the moduli space of compact hyperk\"ahler manifolds 
are twistor connected.
\end{remark}
\end{comment}

\section{Finiteness via Kuga--Satake and abelian varieties}\label{sec:KS}

An approach to the finiteness of polarizations of fixed degree on an abelian variety $A_0$ modulo
the action of $\Aut(A_0)$ similar to the one presented in Section \ref{sec:FinHK}
can be worked out. It provides a new proof of the classical result of 
Narasimhan and Nori \cite{NarNor}, cf.\ \cite[Sect.\ 18]{Milne}. Note that the finiteness of $\ka_{g,d}(A_0)\subset \ka_{g,d}$
can be quickly reduced to the case of principally polarized abelian varieties via Zarhin's trick
which provides a (non-canonical) quasi-finite morphism $\ka_{g,d}\to\ka_{8g}\coloneqq\ka_{8g,1}$, $(A,L)\mapsto ((A\times \hat A)^4,\kl)$ to the moduli space of principally polarized abelian varieties, see e.g.\ the account
in \cite[Sect.\ 4.1]{OS}.

The question how many principal polarizations an abelian variety can admit
has been studied by Lange \cite{La}, who in particular
describes bounds for the cardinality of $\ka_g(A_0)\subset \ka_g$ for $A_0$ with real multiplication. Further results for
products of elliptic curves can be found in \cite{Hay,La2}, where, for example, it is shown that $|\ka_2(E_1\times E_2)|$ is unbounded
for isogenous elliptic curves $E_1,E_2$ without complex multiplication. See also \cite{Howe1,Howe2}.

\medskip

We shall now indicate an alternative proof of Theorem \ref{thm:HK} based on the Kuga--Satake construction which
reduces the problem to the finiteness for abelian varieties. For simplicity, we  restrict to the case of K3 surfaces and leave the necessary
modifications in the higher-dimensional case to the reader.
 
Starting with the Hodge structure of weight two  $H^2(S,\ZZ)_{L\text{-pr}}$ of an arbitrary polarized K3 surface $(S,L)$,
the Kuga--Satake construction produces an abelian variety ${\rm KS}(S,L)$ of dimension $g=2^{19}$. By choosing a pair of positive orthogonal vectors in $H^2(S,\ZZ)_{L\text{-pr}}$,
e.g.\ $e_1+f_1, e_2+f_2$ in the two copies of the hyperbolic plane $U$ in the decomposition
$ H^2(S,\ZZ)_{L\text{-pr}}\cong E_8(-1)^{\oplus 2}\oplus U^{\oplus 2}\oplus\ZZ(-d)$,
one can define a polarization on ${\rm KS}(S,L)$ of a fixed degree $d'$, cf.\ \cite[Ch.\ 4]{HuyK3}. Suppressing
the choice of finite level structures, this leads to a morphism
\begin{equation}\label{eqn:KS} M_d\to \ka_{g,d'},
\end{equation}
which is known to be quasi-finite, see  \cite{Andre,Rizov} and \cite[Prop.\ 5.10]{Maulik}.
 
The Kuga--Satake construction applies to any Hodge structure of K3 type, in particular to the transcendental lattice $T(S)$ of a K3 surface, which
is independent of the polarization, and to $T(S)\oplus L^\perp$. Here, $L^\perp\subset\NS(S)$ denotes the primitive sublattice
orthogonal to an ample $L\in \NS(S)$. Moreover, $${\rm KS}(T(S)\oplus L^\perp)\cong {\rm KS}(T(S))^{2^{\rho(S)-1}}.$$
In particular, for two different polarizations $L_1,L_2$ on a K3 surface $S$ one has ${\rm KS}(T(S)\oplus L_1^\perp)\cong {\rm KS}(T(S))^{2^{\rho(S)-1}}\cong{\rm KS}(T(S)\oplus L_2^\perp)$.
Now, the natural inclusion $\iota_i\colon T(S)\oplus L_i^\perp\subset H^2(S,\ZZ)_{L_i\text{-pr}}$ defines a finite index sublattice, leading to an isogeny $${\rm KS}(T(S))^{2^{\rho(S)-1}}\cong{\rm KS}(T(S)\oplus L^\perp)\twoheadrightarrow {\rm KS}(S,L).$$ Its degree  depends only
on the index of $\iota_i$. There may be infinitely many polarizations $L_i$ of fixed degree but only finitely many
inequivalent ones under the action of $\OO(\NS(S))$. Hence,
the inclusions $\iota_i$ have bounded index and, therefore, the ${\rm KS}(S,L_i)$ are all
quotients of bounded degree of the abelian variety ${\rm KS}(T(S))^{2^{\rho(S)-1}}$.
 
This then shows the following result which allows one to deduce from the finiteness of $\ka_{g,d}(A_0)$ for abelian varieties \cite{NarNor,Milne} the
finiteness of $M_d(S_0)$ for K3 surfaces (and more generally of $M_d(X_0)$ for compact hyperk\"ahler manifolds).

\begin{cor}
For any K3 surface $S_0$ the image of $M_d(S_0)\subset M_d$ under the quasi-finite map (\ref{eqn:KS})
is contained in a finite union of sets $\ka_{g,d'}(A_i)\subset \ka_{g,d'}$, where the finitely many abelian varieties
$A_i$ are quotients of the abelian variety ${\rm KS}(T(S))^{2^{\rho(S)-1}}$ of a fixed degree.\qed
\end{cor}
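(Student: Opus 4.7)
The plan is to extract from the discussion preceding the corollary the fact that the Kuga--Satake variety of any $(S,L) \in M_d(S_0)$ is an isogeny quotient, of uniformly bounded degree, of a single fixed abelian variety, and then invoke the general principle that a fixed abelian variety has only finitely many isogeny quotients of bounded degree up to isomorphism.

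First I would fix $(S,L) \in M_d(S_0)$ and use an isomorphism $S \cong S_0$ to identify $T(S) \cong T(S_0)$ as integral Hodge structures of K3 type. In particular, the abelian variety
\[ B \coloneqq {\rm KS}(T(S_0))^{2^{\rho(S_0)-1}} \cong {\rm KS}(T(S) \oplus L^\perp) \]
is independent of the choice of $(S,L) \in M_d(S_0)$, and the finite-index inclusion $\iota_L\colon T(S) \oplus L^\perp \,\hookrightarrow H^2(S,\ZZ)_{L\text{-pr}}$ induces a surjective isogeny $\pi_L\colon B \twoheadrightarrow {\rm KS}(S,L)$ whose degree $n_L$ depends only on that index. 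Next I would argue that $n_L$ is bounded uniformly in $L$: the index is a lattice invariant that depends only on the isometry class of the embedding $L^\perp \subset \NS(S_0)$, hence only on the $\OO(\NS(S_0))$-orbit of $L$, and by Remark \ref{rem:Kneser}(ii) there exist only finitely many such orbits of primitive classes of square $(L)^2 = d$. Thus the indices, and so the degrees $n_L$, take only finitely many values, bounded by some integer $N$.

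The remaining step is the standard observation that a fixed abelian variety $B$ admits, up to isomorphism, only finitely many isogeny quotients of degree at most $N$: any such quotient arises from a finite subgroup scheme of $B$ of order at most $N$, hence contained in the $N!$-torsion $B[N!]$, which is itself a finite group scheme with only finitely many subgroup schemes. Therefore the set $\{\,{\rm KS}(S,L) \mid (S,L) \in M_d(S_0)\,\}$ consists of finitely many isomorphism classes $A_1,\ldots,A_k$, each a quotient of $B$ of degree at most $N$, and the image of $M_d(S_0)$ under (\ref{eqn:KS}) is contained in $\ka_{g,d'}(A_1) \cup \cdots \cup \ka_{g,d'}(A_k)$. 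The main technical point is the uniform boundedness of the isogeny degrees, which rests on the lattice-theoretic finiteness statement in Remark \ref{rem:Kneser}; the rest is bookkeeping built on the Kuga--Satake compatibilities already recorded in the text.
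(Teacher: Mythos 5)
Your proposal follows essentially the same route as the paper: the preceding discussion in Section \ref{sec:KS} already records that ${\rm KS}(S,L)$ is a quotient of ${\rm KS}(T(S))^{2^{\rho(S)-1}}$ whose degree is controlled by the index of $T(S)\oplus L^\perp\subset H^2(S,\ZZ)_{L\text{-pr}}$, and that this index is bounded because there are only finitely many $\OO(\NS(S))$-orbits of polarizations of degree $d$ (Remark \ref{rem:Kneser}), which is exactly your argument. The only point you make explicit that the paper leaves implicit is the standard fact that a fixed abelian variety has only finitely many isogeny quotients of bounded degree (via subgroup schemes of bounded order), which is a correct and routine completion rather than a different approach.
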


Using the finiteness of $\ka_{g,d'}(A_i)$ \cite{NarNor} and the finiteness of the fibres of (\ref{eqn:KS}), this 
proves Theorem \ref{thm:HK} once again.


\section{Parametrization by special cycles and FM interpretation}\label{sec:paraFM}

The discussion in Section \ref{sec:FinHK} can be understood more explicitly in the case of K3 surfaces,
although most issues related to lattice theory remain, for example the decomposition of the moduli
space according to the various $\ell\in N$ and the possible choices for the
primitive embeddings $N\,\hookrightarrow \Lambda$ still occur (unless $N$ is of small rank as in \cite{Beauv}).

\subsection{} Moduli spaces of lattice polarized K3 surfaces have first been studied in \cite{Dolg} and later also in \cite{Beauv}.
Dolgachev shows (using the global Torelli theorem) that the moduli space of `ample $N$-polarized' K3 surfaces $M_N$ injects into the quotient
$\tilde \OO(T_i)\setminus D_{T_i}$. Here, $\tilde\OO(T_i)\subset\OO(T_i)$ is the finite index subgroup of orthogonal transformations that
extend to all of $\Lambda$ by the identity on $\eta_i(N)$. Note that in \cite{Dolg} the embedding $N\,\hookrightarrow \Lambda$
is actually fixed so that only one period domain $D_i$ has to be considered and that $\iota(N)$ is only requested to contain an ample class (but 
without fixing $\ell\in N$ or its image $\iota(\ell)$). So there exists a quasi-finite morphism $M_{N,d}\to M_N$ inducing injections
$M_{(N,\ell)}\,\hookrightarrow M_N$. In \cite{Beauv} the rank of $N$ is small enough
to ensure that the embedding is actually unique, so that again only one period domain occurs.

For K3 surfaces there exists an interpretation of all points in the fibre of
$\pi_i\colon M^i_{N,d}\to \OO(T_i)\setminus D_i$ through a very general
$(S_0,\iota_0)$. They correspond to $N$-polarized K3
surfaces whose transcendental part is Hodge isometric to $T_i\cong T(S_0)$. Those are known to be the Fourier--Mukai
partners of $S_0$ with  N\'eron--Severi lattice isomorphic to $\NS(S_0)$, see \cite[Ch.\ 16]{HuyK3} for references.

Hence, for K3 surfaces $\pi_i$ factorizes as $$\pi_i\colon M^i_{N,\ell_j}\,\hookrightarrow \tilde \OO(T_i)\setminus D_i\twoheadrightarrow \OO(T_i)\setminus
D_i,$$ where the degree of the second map is essentially the number of Fourier--Mukai partners for the very general K3 surfaces
parametrized by $M^i_{N,d}$ with fixed N\'eron--Severi lattice $\NS(S_0)$.

\subsection{}
As the sets $M_d(S_0)$ (or $\ka_{g,d}(A_0)$, $M_d(X_0)$, etc.) are finite, one may wonder whether they can be realized
as fibres of a finite map from $M_d$ to some variety or space. Clearly, since $|M_d(S_0)|$ for varying $(S_0,L_0)\in M_d$
is unbounded (cf.\ Section \ref{sec:unbound}), this cannot be true literally. However, we shall explain that this idea
can be turned into a correct statement, which then also sheds light on the distributions of the $M_d(S_0)$.
For this purpose it is more convenient to replace  the sets $M_d(S_0)\subset M_d$ by the sets
 $$M_d(\Db(S_0))\subset M_d$$ of all
$(S,L)\in M_d$ for which there exists an exact linear equivalence  
\begin{equation}\label{eqn:FM}
\Db(S_0)\cong \Db(S)
\end{equation}
between the bounded derived categories of coherent sheaves on $S$ and $S_0$.
According to a result of Mukai and Orlov, the condition is equivalent to the existence of  a Hodge isometry 
\begin{equation}\label{eqn:OT}
T(S)\cong T(S_0).
\end{equation}
Recall that for $\rho(S_0)\geq 12$ the existence of  a Hodge isometry (\ref{eqn:OT}) is in fact equivalent to the existence
of an isomorphism $S\cong S_0$. In general, as $M_d(S_0)$ also the set
 $M_d(\Db(S_0))$ is finite and the arguments in Section \ref{sec:FinHK} in fact reprove this result.
See \cite[Ch.\ 16]{HuyK3} for references and further details and \cite{HP} proving finiteness
of Fourier--Mukai orbits in the moduli space of quasi-polarized K3 surfaces.

\begin{cor}\label{cor:parOS}
Consider a polarized K3 surface $(S_0,L_0)\in M_d$ with transcendental lattice $T\coloneqq T(S_0)$ and its
associated period domain $D_T\subset\PP(T\otimes\CC)$. Then there exist 
a   quasi-projective variety $M_{S_0}$  of dimension $20-\rho(S_0)$ and morphisms
$$\xymatrix{M_{S_0}\ar[r]^-\Phi\ar[d]_\pi&M_d\\
~~~~~~~~~~\bar M_{S_0}\coloneqq \OO(T)\setminus D_T,&}$$
such  that
\begin{enumerate}
\item[(i)] $\pi$ is quasi-finite and dominant;
\item[(ii)] $M_d(\Db(S_0))=\Phi(\pi^{-1}(t_0))$ for some point $t_0\in \bar M_{S_0}$;
\item[(iii)] $M_d(\Db(S_t))=\Phi(\pi^{-1}(t))$ for very general $t\in\bar M_{S_0}$ and any $(S_t,L_t)\in\Phi(\pi^{-1}(\pi(t)))$.\qed
\end{enumerate}
\end{cor}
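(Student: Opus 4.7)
Set $T \coloneqq T(S_0)$ and $\Lambda \coloneqq H^2(S_0, \ZZ)$. By Remark \ref{rem:Kneser}, only finitely many isomorphism classes of primitive sublattices $N^{(j)} \subset \Lambda$ of signature $(1, \rho(S_0) - 1)$ satisfy $(N^{(j)})^\perp \cong T$; pick embeddings $\eta_j\colon N^{(j)} \hookrightarrow \Lambda$ realizing them. For each $j$ let $\ell_{j,1}, \ldots, \ell_{j, m_j} \in N^{(j)}$ be representatives of the finitely many $\OO(N^{(j)})$-orbits of primitive classes of square $d$. I would define $M_{S_0}$ as the finite disjoint union, over all such pairs $(j,k)$, of the components $M^j_{(N^{(j)}, \ell_{j,k})}$ of Proposition \ref{prop:DMcoarse} associated with the embedding $\eta_j$. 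This is a quasi-projective variety, and each component has dimension $\dim D_T = 20 - \rho(S_0)$, as required.

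Next, I would define $\Phi\colon M_{S_0} \to M_d$ as the restriction of the morphism (\ref{eqn:morph}), sending $(S, \iota) \mapsto (S, \iota(\ell_{j,k}))$; and $\pi\colon M_{S_0} \to \bar M_{S_0} = \OO(T) \setminus D_T$ component by component as the period map built in Section \ref{sec:FinHK} (the analog of the $\pi_i$ appearing just before Corollary \ref{cor:finitefibre}), using the fixed identification $\eta_j(N^{(j)})^\perp \cong T$.

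For (i), quasi-finiteness on each component is exactly Corollary \ref{cor:finitefibre}, and dominance is clear on the component containing $(S_0, \iota_0)$ by surjectivity of the period map for lattice-polarized K3 surfaces. For (ii), set $t_0 \coloneqq \pi(S_0, \iota_0)$. By construction, $(S, \iota) \in \pi^{-1}(t_0)$ corresponds to a K3 surface $S$ whose transcendental lattice carries a Hodge structure Hodge-isometric to $T = T(S_0)$, which by Orlov's theorem (cf.\ \cite[Ch.\ 16]{HuyK3}) is equivalent to $\Db(S) \cong \Db(S_0)$; thus $\Phi(\pi^{-1}(t_0)) \subseteq M_d(\Db(S_0))$. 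Conversely, every $(S, L) \in M_d(\Db(S_0))$ admits a Hodge isometry $T(S) \cong T$, which forces $\NS(S) \cong N^{(j)}$ for some $j$ and $L$ to lie in some $\OO(N^{(j)})$-orbit $\ell_{j,k}$, yielding a lift to $M^j_{(N^{(j)}, \ell_{j,k})}$ mapping under $\pi$ to $t_0$.

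For (iii), note that for very general $t \in \bar M_{S_0}$ the Hodge structure on $T$ given by $t$ admits no proper primitive sub-Hodge structure, so for any $(S_t, L_t) \in \Phi(\pi^{-1}(t))$ the full transcendental lattice $T(S_t)$ equals $T$ under the identification, and the argument of (ii) carries over verbatim with $S_0$ replaced by $S_t$, giving $M_d(\Db(S_t)) = \Phi(\pi^{-1}(t))$. The main obstacle in making this plan rigorous will be to verify that the finitely many components of $M_{S_0}$ together capture every Fourier--Mukai partner of $S_0$, including those whose N\'eron--Severi lattice differs from $\NS(S_0)$; this combines Orlov's criterion (reducing derived equivalence to a Hodge isometry of transcendental lattices) with the lattice-theoretic classification of primitive embeddings $T \hookrightarrow \Lambda$ by their orthogonal complements, guaranteeing that every N\'eron--Severi lattice that can occur for an FM partner of $S_0$ appears as one of the $N^{(j)}$.
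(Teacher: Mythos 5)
Your construction is correct and is essentially the paper's own (implicit) argument: the corollary carries no separate proof precisely because Section \ref{sec:paraFM} already supplies all the ingredients you use, namely the components $M^i_{(N,\ell_j)}$, the forgetful morphism (\ref{eqn:morph}), the period maps to $\OO(T_i)\setminus D_i$ with finite fibres (Corollary \ref{cor:finitefibre}), and the Orlov/Fourier--Mukai interpretation of those fibres through (very) general points. The only refinement worth making is that the finitely many components should be indexed by the $\OO(\Lambda)$-equivalence classes of primitive embeddings $T\hookrightarrow \Lambda$ (equivalently, by orthogonal complements $N^{(j)}$ \emph{together with} their embedding classes), rather than by abstract isomorphism classes of the $N^{(j)}$ alone, since one isomorphism type may admit several inequivalent primitive embeddings whose complements are all isometric to $T$.
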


In other words, the Fourier--Mukai orbits $M_d(\Db(S_0))$ are indeed (images of) fibres of a certain quasi-finite morphism of some subvariety of $M_d$
of dimension $20-\rho(S_0)$.

\begin{remark} Note that for the case of Picard rank one, $T(S_0)\cong\Lambda_d$ and $\pi$ is simply the composition
$$M_d\,\hookrightarrow \tilde \OO(\Lambda_d)\setminus D_d\to \OO(\Lambda_d)\setminus D_d.$$
This map has been studied in \cite{Hosono,Og,Stel}. Its degree, which is the number of Fourier--Mukai partners for the very general K3 surface,
is known to be  $2^{\tau(d)-1}$, where $\tau(d)$ is the number of prime divisors of $d/2$.
\end{remark}


\end{document}